\documentclass[a4paper,11 pt,reqno]{amsart} 
\usepackage{a4,amsbsy,amscd,amssymb,amstext,amsmath,mathtools,mathdots,latexsym,oldgerm,enumerate,verbatim,graphicx,ytableau,xy}
\usepackage[hang,flushmargin]{footmisc} 

\setcounter{tocdepth}{2}              

\makeatletter                                       
\def\l@subsection{\@tocline{2}{0pt}{2.5pc}{2.5pc}{}}
\makeatother                                        

\makeatletter                                                  
\def\chapter{\clearpage\thispagestyle{plain}\global\@topnum\z@ 
\@afterindenttrue \secdef\@chapter\@schapter}
\makeatother



\newtheorem{thm} {Theorem} [section]
\newtheorem{prop}{Proposition} [section]
\newtheorem{lem} {Lemma} [section]






\newtheorem{cornn}{Corollary}

\newtheorem{notnn}{Notation}

\theoremstyle{definition}



\newtheorem{rem} {Remark} [section]
\newtheorem{rems} [rem]{Remarks}
\newtheorem{exa} [rem] {Example}




\newcommand{\mf}{\mathfrak}
\newcommand{\mc}{\mathcal}
\newcommand{\mb}{\mathbb}

\newcommand{\nts}{\negthinspace}     
\newcommand{\Nts}{\nts\nts}

\newcommand{\ov}{\overline}


\newcommand{\ot}{\otimes}           
\newcommand{\la}{\langle}
\newcommand{\ra}{\rangle}

\newcommand{\Hom}{{\rm Hom}}        

\newcommand{\End}{{\rm End}}

\newcommand{\Mat}{{\rm Mat}}

\newcommand{\ind}{{\rm ind}}

\newcommand{\Sym}{{\rm Sym}} 

\newcommand{\id}{{\rm id}}

\let\ttie\t
\newcommand{\tie}[1]{{\let\t\ttie \ttie#1}}
\renewcommand{\t}{\mf{t}}  





\newcommand{\GL}{{\rm GL}}


\newcommand{\ve}{\varepsilon}

\makeatletter
\def\vcdots{\vbox{\baselineskip4\p@ \lineskiplimit\z@
\kern3\p@\hbox{.}\hbox{.}\hbox{.}\Nts\nts\kern3\p@}}
\makeatother

\xyoption{all}

\begin{document}

\title{A combinatorial translation principle and diagram combinatorics for the general linear group}

\begin{abstract}
Let $k$ be an algebraically closed field of characteristic $p>0$. We compute the Weyl filtration multiplicities in indecomposable tilting modules and the decomposition numbers for the general linear group over $k$ in terms of cap diagrams under the assumption that $p$ is bigger than the greatest hook length in the partitions involved.
Then we introduce and study the rational Schur functor from a category of $\GL_n$-modules to the category of modules for the walled Brauer algebra.
As a corollary we obtain the decomposition numbers for the walled Brauer algebra when $p$ is bigger than the greatest hook length in the partitions involved. This is a sequel to an earlier paper on the symplectic group and the Brauer algebra.
\end{abstract}

\author[R.\ Tange]{Rudolf Tange}
\address
{School of Mathematics,
University of Leeds,
LS2 9JT, Leeds, UK}
\email{R.H.Tange@leeds.ac.uk}

\maketitle
\markright{\MakeUppercase{A combinatorial translation principle for} $\GL_n$}

\section{Introduction}
The present paper concerns the general linear group and the walled Brauer algebra, it is a sequel to the paper \cite{LT} where the analogous results for the symplectic group and the Brauer algebra are obtained. For more background we refer to the introduction of \cite{LT}.

The walled Brauer algebra, introduced by Turaev \cite{Tur} and Koike \cite{Koi} and later in \cite{BCHLLS}, is a cellular algebra, see \cite[Thm~2.7]{CdVDM}, and an interesting problem is to determine its decomposition numbers. In characteristic $0$ this was first done in \cite{CdV} in terms of certain cap diagrams.

Let $\GL_n$ be the general linear group over an algebraically closed field $k$ of characteristic $p>0$, and let $V$ be the natural module. In characteristic $0$ there is a well-known relation between certain representations of $\GL_n$ and the representations of the walled Brauer algebra $B_{r,s}(n)$, given by the double centraliser theorem for their actions on $V^{\ot r}\ot(V^*)^{\ot s}$. In characteristic $p$ such a connection doesn't follow from the double centraliser theorem and requires more work. This is done in Section~\ref{s.rat_Schur_functor} of the present paper by means of the rational Schur functor.

We determine the Weyl filtration multiplicities in the indecomposable tilting modules $T(\lambda)$ and the decomposition numbers for the induced modules $\nabla(\lambda)$ of $\GL_n$ when the two partitions that form $\lambda$ have greatest hook length less than $p$. We then introduce the rational Schur functor and use it to obtain from the first multiplicities the decomposition numbers of the walled Brauer algebra under the assumption that $p$ is bigger than the greatest hook length in the partitions involved. Our main tools are the ``reduced" Jantzen Sum Formula, truncation, and translation functors. Our approach is based on the same ideas as \cite{LT}.

The paper is organised as follows. In Section~\ref{s.prelim} we introduce the necessary notation. In Section~\ref{s.JSF} we show that certain terms in the Jantzen Sum Formula may be omitted. This leads to a ``strong linkage principle" in terms of a partial order $\preceq$, and the existence of nonzero homomorphisms between certain pairs of induced modules, see Proposition~\ref{prop.linkage}. Although we do not need our strong linkage principle for the translation functors, we do need it for the truncation. In Section~\ref{s.translation} we prove two basic results about translation that we will use: Propositions~\ref{prop.trans_equivalence} and ~\ref{prop.trans_projection}. They are analogues of the two corresponding results in \cite[Sect~4]{LT} and the proofs are straightforward simplifications of the ones in \cite{LT}.

In Section~\ref{s.arrow_diagrams} we introduce arrow diagrams to represent the weights that satisfy our condition, and we show that the nonzero terms in the reduced Jantzen Sum Formula and the pairs of weights for which we proved the existence of nonzero homomorphisms between the induced modules have a simple description in terms of arrow diagrams, see Lemma~\ref{lem.JSF-arrows}. The arrow diagrams in the present paper are rather different from the ones in \cite{LT}, they should be thought of as circular rather than as a line segment.
As in the case of the symplectic group, the order $\preceq$ and conjugacy under the dot action also have a simple description in terms of the arrow diagram, see Remark~\ref{rems.preceq}.1. In Section~\ref{s.filt_mult} we prove our first main result, Theorem~\ref{thm.filt_mult}, which describes the Weyl filtration multiplicities in certain indecomposable tilting modules in terms of cap diagrams.

In Section~\ref{s.dec_num} we prove our second main result, Theorem~\ref{thm.dec_num}, which describes the decomposition numbers for certain induced modules in terms of cap codiagrams.
In Section~\ref{s.rat_Schur_functor} we introduce the rational Schur functor and determine its basic properties. The main results in this section are Theorem~\ref{thm.rat_schur} and Proposition~\ref{prop.rat_mult}.
As a corollary to Theorem~\ref{thm.filt_mult} and Proposition~\ref{prop.rat_mult} we obtain the decomposition numbers of the walled Brauer algebra under the assumption that $p$ is bigger than the greatest hook length in the partitions involved. 

\section{Preliminaries}\label{s.prelim}
First we recall some general notation from \cite{LT}. Throughout this paper $G$ is a reductive group over an algebraically closed field $k$ of characteristic $p>0$, $T$ is a maximal torus of $G$ and $B^+$ is a Borel subgroup of $G$ containing $T$.
We denote the group of weights relative to $T$, i.e. the group of characters of $T$, by $X$. For $\lambda,\mu\in X$ we write $\mu\le\lambda$ if $\lambda-\mu$ is a sum of positive roots (relative to $B^+$). The Weyl group of $G$ relative to $T$ is denoted by $W$ and the set of dominant weights relative to $B^+$ is denoted by $X^+$. In the category of (rational) $G$-modules, i.e. $k[G]$-comodules, there are several special families of modules. For $\lambda\in X^+$ we have the irreducible $L(\lambda)$ of highest weight $\lambda$, and the induced module $\nabla(\lambda)=\ind_B^Gk_\lambda$, where $B$ is the opposite Borel subgroup to $B^+$ and $k_\lambda$ is the 1-dimensional $B$-module afforded by $\lambda$. The Weyl module and indecomposable tilting module associated to $\lambda$ are denoted by $\Delta(\lambda)$ and $T(\lambda)$. To each $G$-module $M$ we can associate its formal character ${\rm ch}\,M=\sum_{\lambda\in X}\dim M_\lambda e(\lambda)\in(\mb Z X)^W$, where $M_\lambda$ is the weight space associated to $\lambda$ and $e(\lambda)$ is the basis element corresponding to $\lambda$ of the group algebra $\mb Z X$ of $X$ over $\mb Z$. Composition and good or Weyl filtration multiplicities are denoted by $[M:L(\lambda)]$ and $(M:\nabla(\lambda))$ or $(M:\Delta(\lambda))$. For a weight $\lambda$, the character $\chi(\lambda)$ is given by Weyl's character formula \cite[II.5.10]{Jan}. If $\lambda$ is dominant, then ${\rm ch}\,\nabla(\lambda)={\rm ch}\,\Delta(\lambda)=\chi(\lambda)$. The $\chi(\lambda)$, $\lambda\in X^+$, form a $\mb Z$-basis of $(\mb Z X)^W$.
For $\alpha$ a root and $l\in\mb Z$, let $s_{\alpha,l}$ be the affine reflection of $\mb R\ot_{\mb Z}X$ defined by $s_{\alpha,l}(x)=x-a\alpha$, where $a=\la x,\alpha^\vee\ra-lp$. Mostly we replace $\la-,-\ra$ by a $W$-invariant inner product and then the cocharacter group of $T$ is identified with a lattice in $\mb R\ot_{\mb Z}X$ and $\alpha^\vee=\frac{2}{\la\alpha,\alpha\ra}\alpha$.
We have $s_{-\alpha,l}=s_{\alpha,-l}$ and the affine Weyl group $W_p$ is generated by the $s_{\alpha,l}$.
Choose $\rho\in\mathbb Q\ot_{\mb Z}X$ with $\la\rho,\alpha^\vee\ra=1$ for all $\alpha$ simple and define the dot action of $W_p$ on $\mb R\ot_{\mb Z}X$ by $w\cdot x=w(\lambda+\rho)-\rho$. The lattice $X$ is stable under the dot action.
The \emph{linkage principle} \cite[II.6.17,7.2]{Jan} says that if $L(\lambda)$ and $L(\mu)$ belong to the same $G$-block, then $\lambda$ and $\mu$ are $W_p$-conjugate under the dot action. We refer to \cite{Jan} part II for more details.

Unless stated otherwise, $G$ will be the general linear group $\GL_n$. The natural $G$-module $k^n$ is denoted by $V$. We let $T$ be the group of diagonal matrices in $\GL_n$. Then $X$ is naturally identified with $\mb Z^n$ such that the $i$-th diagonal coordinate function corresponds to the $i$-th standard basis element $\ve_i$ of $\mb Z^n$. We let $B^+$ be the Borel subgroup of invertible upper triangular matrices corresponding to the set of positive roots $\ve_i-\ve_j$, $1\le i<j\le n$. Then a weight in $\mb Z^n$ is dominant if and only if it is weakly decreasing.
Such a weight $\lambda$ can uniquely be written as
$$[\lambda^1,\lambda^2]\stackrel{\rm def}{=}(\lambda^1_1,\lambda^1_2,\ldots,0,\ldots,0,\ldots,-\lambda^2_2,-\lambda^2_1)$$
where $\lambda^1=(\lambda^1_1,\lambda^1_2,\ldots)$ and $\lambda^2=(\lambda^2_1,\lambda^2_2,\ldots)$ are partitions with $l(\lambda^1)+l(\lambda^2)\le n$. Here $l(\xi)$ denotes the length of a partition $\xi$. So $X^+$ can be identified with pairs of partitions $(\lambda^1,\lambda^2)$ with $l(\lambda^1)+l(\lambda^2)\le n$.
We will also identify partitions with the corresponding Young diagrams. In explicit examples we will only work with partitions with parts at most $10$ and these may be written in ``exponential form": $(10,7,7,4,2,2,1)$ is denoted by $107^242^21$. For $s_1,s_2\in\{1,\ldots,n\}$ with $s_1+s_2\le n$ we denote the subgroup of $W_p$ generated by the $s_{\alpha,l}$, $\alpha=\ve_i-\ve_j$, $i,j\in\{1,\ldots,s_1,n-s_2+1,\dots,n\}$ by $W_p^{s_1,s_2}$. This is the affine Weyl group of a root system of type $A_{s_1+s_2-1}$. The group $W$ acts on $\mb Z^n$ by permutations, and $W_p\cong W\ltimes pX_0$, where $X_0=\{\lambda\in\mb Z^n\,|\,|\lambda|=0\}$ is the type $A_{n-1}$ root lattice and $|\lambda|=\sum_{i=1}^n\lambda_i$. Note that $W_p^{s_1,s_2}\cong W^{s_1,s_2}\ltimes pX_0^{s_1,s_2}$, where $X_0^{s_1,s_2}$ consists of the vectors in $X_0$ which are $0$ at the positions in $\{s_1+1,\ldots,n-s_2\}$, and $W^{s_1,s_2}=\Sym(\{1,\ldots,s_1,n-s_2+1,\dots,n\})$. We will work with $$\rho=(n,n-1,\ldots,1)\,.$$
It is easy to see that if $\lambda,\mu\in X$ are $W_p$-conjugate and equal at the positions in $\{s_1+1,\ldots,n-s_2\}$, then they are $W_p^{s_1,s_2}$-conjugate. The same applies for the dot action.

To obtain our results we will have to make use of quasihereditary algebras. We refer to \cite[Appendix]{D2} and \cite[Ch~A]{Jan} for the general theory.
For a subset $\Lambda$ of $X^+$ and a $G$-module $M$ we say that $M$ \emph{belongs to $\Lambda$} if all composition factors have highest weight in $\Lambda$ and we denote by $O_\Lambda(M)$ the largest submodule of $M$ which belongs to $\Lambda$. For a quasihereditary algebra one can make completely analogous definitions. We denote the category of $G$-modules which belong to $\Lambda$ by $\mc C_\Lambda$. Any quasihereditary algebra $A$ that we consider will be determined by its labelling set $\Lambda\subseteq X^+$ for the irreducibles, endowed with a suitable partial order. The irreducible, standard/costandard and tilting modules are the irreducible, Weyl/induced and tilting modules for $G$ with the same label: the module category of $A$ is equivalent to $\mc C_\Lambda$.

\section{The reduced Jantzen Sum Formula}\label{s.JSF}
In this section we study the Jantzen Sum Formula for the general linear group $\GL_n$. This is analogous to the results in \cite[Sect~3]{LT} for the symplectic group.
Assume for the moment that $G$ is any reductive group. Jantzen has defined for every Weyl module $\Delta(\lambda)$ of $G$ a descending filtration $\Delta(\lambda)=\Delta(\lambda)^0\supseteq\Delta(\lambda)^1\supseteq\cdots$ such that $\Delta(\lambda)/\Delta(\lambda)^1\cong L(\lambda)$ and $\Delta(\lambda)^i=0$ for $i$ big enough. The Jantzen sum formula \cite[II.8.19]{Jan} relates the formal characters of the $\Delta(\lambda)^i$ with the Weyl characters $\chi(\mu)$, $\mu\in X^+$:
\begin{equation}\label{eq.JSF}
\sum_{i>0}{\rm ch}\,\Delta(\lambda)^i=\sum\nu_p(lp)\chi(s_{\alpha,l}\cdot\lambda)\ ,
\end{equation}
where the sum on the right is over all pairs $(\alpha,l)$, with $l$ an integer $\ge1$ and $\alpha$ a positive root such that $\la\lambda+\rho,\alpha^\vee\ra-lp>0$, and $\nu_p$ is the $p$-adic valuation. Here $\chi(\mu)=0$ if and only if $\la\mu+\rho,\alpha^\vee\ra=0$ for some $\alpha>0$, and if $\chi(\mu)\ne0$, then $\chi(\mu)=\det(w)\chi(w\cdot\mu)$, where $w\cdot\mu$ is dominant for a unique $w\in W$. See \cite[II.5.9(1)]{Jan}. We denote the RHS of \eqref{eq.JSF} by $JSF(\lambda)$.

Now return to our standard assumption $G=\GL_n$. For $\lambda\in X$ we have that $\chi(\lambda)\ne0$ if and only if
\begin{align*}
&(\lambda+\rho)_i\ne(\lambda+\rho)_j\text{\quad for all\ }i,j\in\{1,\ldots,n\}\text{\ with\ }i\ne j.
\end{align*}

We will consider any partition of length $\le n$ as an $n$-tuple, by extending it with zeros and for $\xi\in\mb Z^n$ we denote the reversed tuple by $\breve{\xi}$. So $[\lambda^1,\lambda^2]=\lambda^1-\breve{\lambda^2}$. For $i\in\{1,\ldots,n\}$ we put $i'=n+1-i$. So for $\xi\in\mb Z^n$ we have $\xi_i=\breve{\xi}_{i'}$.

\medskip
\emph{For the remainder of this section $\lambda=[\lambda^1,\lambda^2]\in X^+$ and $\lambda^1$ and $\lambda^2$ are \emph{$p$-cores}, unless stated otherwise.}
\medskip

We will use the following characterisation of $p$-cores. Let $\theta\in\mb Z^m$ with $\theta_{i-1}=\theta_i+1$ for all $i\in\{2,\dots,m\}$. Then a partition $\xi$ with $l(\xi)\le m$ is a $p$-core if and only if for all $i\in\{1,\ldots,m\}$ and all integers $l\ge1$, $(\xi+\theta)_i-lp$ occurs in $\xi+\theta$, provided it is $\ge\theta_m$. This is equivalent to the definition in \cite[Ex~I.1.8]{Mac}.

\begin{lem}\label{lem.zero}
Assume $\alpha=\ve_i-\ve_j$, $1\le i<j\le n$, $\la\lambda+\rho,\alpha^\vee\ra=a+lp$, $a,l>0$, and $\chi(s_{\alpha,l}\cdot\lambda)\ne0$. Then $i\le l(\lambda^1)$ and $j>n-l(\lambda^2)$.
Furthermore, $(\lambda+\rho)_i-a>n-l(\lambda^1)$ and $(\lambda+\rho)_j+a\le l(\lambda^2)$.
\end{lem}
\begin{proof}
We have $(\lambda+\rho)_i-(\lambda+\rho)_j=\la\lambda+\rho,\alpha^\vee\ra=a+lp$. Note that $(\lambda+\rho)_h=(\lambda^1+\rho)_h$ for all $h\le n-l(\lambda^2)$ and $-(\lambda+\rho)\breve{{}_h}=(\lambda^2-\breve{\rho})_h$ for all $h\le n-l(\lambda^1)$. First assume $j\le n-l(\lambda^2)$. Then $(\lambda+\rho)_j=(\lambda^1+\rho)_j$ and $(\lambda+\rho)_i=(\lambda^1+\rho)_i$. Now $(\lambda+\rho)_i-lp=(\lambda+\rho)_j+a=s_{\alpha,l}(\lambda+\rho)_j$ must occur in $\lambda^1+\rho$, clearly strictly between the $i$-th and $j$-th position. So it also occurs in $\lambda+\rho$ strictly between these positions. So $s_{\alpha,l}(\lambda+\rho)$ contains a repeat, contradicting $\chi(s_{\alpha,l}\cdot\lambda)\ne0$.

Next assume $i>l(\lambda^1)$. Then $j'<i'\le n-l(\lambda^1)$. So $-(\lambda+\rho)\breve{\,}_{\nts j'}-lp=-(\lambda+\rho)\breve{\,}_{\nts i'}+a>-\breve{\rho}_{i'}$ must occur in $\lambda^2-\breve{\rho}$ strictly between the $j'$-th and $i'$-th position. So it also must occur in $-(\lambda+\rho)\breve{\,}$ strictly between the $j'$-th and $i'$-th position,
and this means that $s_{\alpha,l}(\lambda+\rho)_i=(\lambda+\rho)_i-a=(\lambda+\rho)_j+lp$ must occur in $\lambda+\rho$ strictly between the $i$-th and $j$-th position.
So $s_{\alpha,l}(\lambda+\rho)$ contains a repeat, contradicting $\chi(s_{\alpha,l}\cdot\lambda)\ne0$.

Now assume $(\lambda+\rho)_i-a\le n-l(\lambda^1)$. Then $(\lambda^2-\breve{\rho})_{j'}-lp=-(\lambda+\rho)_j-lp=-((\lambda+\rho)_i-a)\ge l(\lambda^1)-n=-\breve{\rho}_{n-l(\lambda^1)}$ must occur in $\lambda^2-\breve{\rho}$ strictly between the $j'$-th and $l(\lambda^1)'$-th position. As before this means that $s_{\alpha,l}(\lambda+\rho)_i=(\lambda+\rho)_i-a=(\lambda+\rho)_j+lp$ must occur in $\lambda+\rho$ strictly between the $l(\lambda^1)$-th and $j$-th position. Since $i\le l(\lambda^1)$, $s_{\alpha,l}(\lambda+\rho)$ contains a repeat, contradicting $\chi(s_{\alpha,l}\cdot\lambda)\ne0$.

Finally assume $(\lambda+\rho)_j+a>l(\lambda^2)$. Then $(\lambda^1+\rho)_i-lp=(\lambda+\rho)_i-lp=(\lambda+\rho)_j+a>l(\lambda^2)=\rho_{l(\lambda^2)'}$ must occur in $\lambda^1+\rho$ strictly between the $i$-th and $l(\lambda^2)'$-th position. As before this means that it also occurs in $\lambda+\rho$ strictly between these positions. Since $j\ge l(\lambda^2)'$, $s_{\alpha,l}(\lambda+\rho)$ contains a repeat, contradicting $\chi(s_{\alpha,l}\cdot\lambda)\ne0$.
\end{proof}

By the previous lemma we may, when $\lambda^1$ and $\lambda^2$ are $p$-cores, restrict the sum on the RHS of \eqref{eq.JSF} to the positive roots $\alpha=\ve_i-\ve_j$ with $1\le i\le l(\lambda^1)$ and $n-l(\lambda^2)<j\le n$ (and $\chi(s_{\alpha,l}\cdot\lambda)\ne0$). We will refer to this sum as the \emph{reduced sum} and to the whole equality as the \emph{reduced Jantzen Sum Formula}. For $\mu,\nu\in\mb Z^n$ we write $\mu\subseteq\nu$ when $\mu_i\le\nu_i$ for all $i\in\{1,\ldots,n\}$, and we denote the weakly decreasing permutation of $\mu$ by ${\rm sort}(\mu)$. The next lemma shows that, when working with Weyl characters, the nonzero terms in the reduced sum have distinct Weyl characters.

\begin{lem}\label{lem.inclusion}
Let $\alpha=\ve_i-\ve_j$, $1\le i\le l(\lambda^1)$, $n-l(\lambda^2)<j\le n$, be a positive root with $\la\lambda+\rho,\alpha^\vee\ra-lp>0$ and $\chi(s_{\alpha,l}\cdot\lambda)\ne0$ for some integer $l\ge1$. Then the first $l(\lambda^1)$ entries of $s_{\alpha,l}(\lambda+\rho)$ are distinct and $>n-l(\lambda^1)$ and the last $l(\lambda^2)$ entries are distinct and $\le l(\lambda^2)$.
Now put $\mu=[\mu^1,\mu^2]={\rm sort}(s_{\alpha,l}(\lambda+\rho))-\rho$. Then, $\mu^h$ is a partition with $\mu^h\subsetneqq\lambda^h$ for all $h\in\{1,2\}$,
and $\mu$ is $W_p^{l(\lambda^1),l(\lambda^2)}$-conjugate to $\lambda$ under the dot action. Furthermore, the map $(\alpha,l)\mapsto\mu$ is injective.
\end{lem}

\begin{proof}
The first assertion follows from the last assertion of Lemma~\ref{lem.zero} and the fact that $\chi(s_{\alpha,l}\cdot\lambda)\ne0$. Furthermore, it is also clear that we can sort $s_{\alpha,l}(\lambda+\rho)$ by only permuting the first $l(\lambda^1)$ and the last $l(\lambda^2)$ entries. Since $s_{\alpha,l}(\lambda+\rho)\subseteq \lambda+\rho$ and $\lambda+\rho$ is (strictly) decreasing we will also have ${\rm sort}(s_{\alpha,l}(\lambda+\rho))\subseteq\lambda+\rho$ and therefore $\mu^h$ is a partition with $\mu^h\subsetneqq\lambda^h$ for all $h\in\{1,2\}$. The set of values in $s_{\alpha,l}(\lambda+\rho)$ is obtained by choosing two values in $\lambda+\rho$ and lowering the greatest and increasing the smallest to two new values. So it is clear how to recover $i$, $j$, $a$ and $l$ from the value set of $s_{\alpha,l}(\lambda+\rho)$: $i$ and $j$ are the positions of the two ``old" values of $\lambda+\rho$ that do not occur in $s_{\alpha,l}(\lambda+\rho)$, and $a$ follows from comparing the greatest of the two old values with the greatest of the two new values.
\end{proof}

\begin{exa}
If $\lambda^1$ and $\lambda^2$ are $p$-cores $\Delta(\lambda)$ may have composition factors $L(\mu)$ with $\mu^h\nsubseteq\lambda^h$ for some $h\in\{1,2\}$.
For example, take $p=3$, $n=4$ and $\lambda=[31,1]$. Let $[\xi]$ denote $[\xi,\emptyset]$. Then $\lambda^1=31$ and $\lambda^2=1$ are $p$-cores and we have $JSF([1^3])=0$, $JSF([21])=\chi([1^3])$, $JSF([3])=-\chi([1^3])+\chi([21])={\rm ch}\,L([21])$, $JSF(\lambda)=\chi([21])+\chi([3])={\rm ch}\,L([1^3])+2{\rm ch}\,L([21])+{\rm ch}\,L([3])$. So $L([1^3])$ is a composition factor of $\Delta(\lambda)$ and $1^3\nsubseteq\lambda^1$.
\end{exa}

Note that any partition $\xi$ with $\xi_1+l(\xi)\le p$ is a $p$-core, since $\xi_1+l(\xi)-1$ is the greatest hook length. Eventually, we will need that both $\lambda^1$ and $\lambda^2$ satisfy this stronger assumption.
Define the partial order $\preceq$ on $X^+$ as follows:

\medskip

\emph{$\mu\preceq\lambda$ if and only if there is a sequence of dominant weights $\lambda=\lambda_1,\ldots,\lambda_t=\mu$, $t\ge1$, such that for all $r\in\{1,\ldots,t-1\}$, $\lambda_{r+1}=ws_{\alpha,l}\cdot\lambda_r$ for some $w\in W^{\lambda^1_r,\lambda^2_r}$, $\alpha=\ve_i-\ve_j$, $1\le i\le l(\lambda_r^1)$, $n-l(\lambda_r^2)<j\le n$, and $l\ge1$ with $\la\lambda_r+\rho,\alpha^\vee\ra-lp\ge1$, and $\chi(s_{\alpha,l}\cdot\lambda_r)\ne0$.}

\medskip

Note that when $\lambda^h_1+l(\lambda^h)\le p$ for all $h\in\{1,2\}$, $\mu\preceq\lambda$ implies that $\mu^h\subseteq\lambda^h$ for all $h\in\{1,2\}$ and $\mu$ is $W_p^{l(\lambda^1),l(\lambda^2)}$-conjugate to $\lambda$ under the dot action. This, in turn, implies that $\mu\le\lambda$ and $|\lambda^1|-|\lambda^2|=|\mu^1|-|\mu^2|$. 
Put $$\Lambda_p=\{\mu\in X^+\,|\,\mu^h_1+l(\mu^h)\le p\text{\ for all\ }h\in\{1,2\}\}\,.$$

Assertion (i) below says that, when $\lambda^h_1+l(\lambda^h)\le p$ for some $h\in\{1,2\}$, nonzero contributions of roots $\alpha=\ve_i-\ve_j$, $1\le i\le l(\lambda^1)$, $n-l(\lambda^2)<j\le n$, have a unique $l$-value.

\begin{prop}\label{prop.linkage}
Let $\lambda\in X^+$.
\begin{enumerate}[{\rm(i)}]
\item Assume $\lambda^h_1+l(\lambda^h)\le p$ for some $h\in\{1,2\}$. If $\alpha=\ve_i-\ve_j$, $1\le i\le l(\lambda^1)$, $n-l(\lambda^2)<j\le n$, and $l,a$ are integers $\ge1$ such that $\la\lambda+\rho,\alpha^\vee\ra=a+lp$ and $\chi(s_{\alpha,l}\cdot\lambda)\ne0$, then $a<p$.
\item If $\Lambda\subseteq\Lambda_p$ is $\preceq$-saturated, then the algebra $O_\Lambda(k[G])^*$ is quasihereditary with partially ordered labelling set $(\Lambda,\preceq)$ and the Weyl and induced modules as standard and costandard modules. In particular, if $[\Delta(\lambda):L(\mu)]$ or $(T(\lambda):\nabla(\mu))$ is nonzero, then $\mu\preceq\lambda$.
\item If $\mu$ is maximal with respect to $\preceq$ amongst the dominant weights $\nu$ for which $\chi(\nu)$ occurs in the RHS of the reduced Jantzen Sum Formula associated to $\lambda$ or amongst the dominant weights $\prec\lambda$, then we have \break $\dim\Hom_G(\nabla(\lambda),\nabla(\mu))=[\Delta(\lambda):L(\mu)]\ne0$.
\end{enumerate}
\end{prop}

\begin{proof}
(i). Let $\alpha$, $i$, $j$, $l$ be as stated and assume $\lambda^1_1+l(\lambda^1)\le p$. If $a\ge p$, then $(\lambda+\rho)_i-a\le (\lambda+\rho)_i-p\le\lambda^1_i+n-p\le n-l(\lambda^1)$, which contradicts Lemma~\ref{lem.zero}. If $\lambda^2_1+l(\lambda^2)\le p$ and $a\ge p$, then $(\lambda+\rho)_j+a\ge(\lambda+\rho)_j+p>p-\lambda^2_{j'}\ge l(\lambda^2)$, which contradicts Lemma~\ref{lem.zero}.\\
(ii) and (iii) are proved as in the proof of \cite[Prop~3.1]{LT}, where in (ii) we do the induction on $|\lambda^1|+|\lambda^2|$.
\end{proof}

\section{Translation Functors}\label{s.translation}
The results in this section are analogues of \cite[Thm~3.2,3.3, Prop~3.4]{CdV}, \cite[II.7.9, 7.14-16]{Jan} and \cite[Sect~4]{LT}. Our results don't follow from the ones in \cite{Jan}, see Remark~\ref{rem.translation}. As in \cite[Sect~4]{LT} we will not try to reformulate/generalise these results in terms of $W_p^{s_1,s_2}$ and a type $A_{s_1+s_2-1}$ alcove geometry analogous to \cite[Sect~5-7]{CdVM2} in the Brauer algebra case, 
but we will choose a ``combinatorial" approach similar to \cite{CdV}, using the notion of the ``support" of a dominant weight. This suffices for our applications in Sections~\ref{s.filt_mult} and \ref{s.dec_num}. As in \cite[Sect~4]{LT} the notion of the support of a dominant weight arises from an application of Brauer's formula \cite[II.5.8]{Jan} and the role of the induction and restriction functors in \cite{CdV} is in our setting played by the translation functors.

Recall that the tensor product of two modules with a good/Weyl filtration has a good/Weyl filtration, see \cite[II.4.21, 2.13]{Jan}. Let $\lambda\in X^+$. Then we have by Brauer's formula that $\chi(\lambda)\sum_{i=1}^ne(\ve_i)=\sum_{\mu\in{\rm Supp}_1(\lambda)}\chi(\mu)$ and $\chi(\lambda)\sum_{i=1}^ne(-\ve_i)=\sum_{\mu\in{\rm Supp}_2(\lambda)}\chi(\mu)$, where ${\rm Supp}_1(\lambda)$ consists of all $\mu=[\mu^1,\mu^2]$\\$\in X^+$ which can be obtained by adding a box to $\lambda^1$ or removing a box from $\lambda^2$, but not both, and ${\rm Supp}_2(\lambda)$ consists of all $\mu=[\mu^1,\mu^2]\in X^+$ which can be obtained by removing a box from $\lambda^1$ or adding a box to $\lambda^2$, but not both. Here we used the rules for $\chi(\lambda)$ to be nonzero from Section~\ref{s.JSF}. Note that $\mu\in{\rm Supp}_1(\lambda)$ if and only if $\lambda\in{\rm Supp}_2(\mu)$.
Since ${\rm ch}\,V=\sum_{i=1}^ne(\ve_i)$, it follows that $\nabla(\lambda)\ot V$ resp. $\nabla(\lambda)\ot V^*$ has a good filtration with sections $\nabla(\mu)$, $\mu\in{\rm Supp}_1(\lambda)$ resp. $\mu\in{\rm Supp}_2(\lambda)$. Similarly, since ${\rm ch}\,V^*=\sum_{i=1}^ne(-\ve_i)$, it follows that $\Delta(\lambda)\ot V$ resp. $\Delta(\lambda)\ot V^*$ has a Weyl filtration with sections $\Delta(\mu)$, $\mu\in{\rm Supp}_1(\lambda)$ resp. $\mu\in{\rm Supp}_2(\lambda)$.

We recall the definition and basic properties of the translation functors. For $\lambda\in X^+$ the projection functor ${\rm pr}_\lambda:\{G\text{-modules}\}\to\{G\text{-modules}\}$ is defined by ${\rm pr}_\lambda M=O_{W_p\cdot\lambda\cap X^+}(M)$. Then $M=\bigoplus_\lambda{\rm pr}_\lambda M$ where the sum is over a set of representatives of the linkage classes in $X^+$, see \cite[II.7.3]{Jan}.
Now let $\lambda,\lambda'\in X^+$ with $\lambda'\in{\rm Supp}_h(\lambda)$, $h\in\{1,2\}$. Then $\lambda'-\lambda=\ve_i$ for some $i$ when $h=1$ and $\lambda'-\lambda=-\ve_i$ for some $i$ when $h=2$. We define the \emph{translation functor} $T_\lambda^{\lambda'}:\{G\text{-modules}\}\to\{G\text{-modules}\}$ by $T_\lambda^{\lambda'}M={\rm pr}_{\lambda'}(({\rm pr}_\lambda M)\ot V)$ when $h=1$ and by $T_\lambda^{\lambda'}M={\rm pr}_{\lambda'}(({\rm pr}_\lambda M)\ot V^*)$ when $h=2$.
So this is just a special case of the translation functors from \cite[II.7.6]{Jan}, since the dominant $W$-conjugate of $\lambda'-\lambda$ is $\ve_1$ ($h=1$) or $-\ve_n$ ($h=2$), and $V=\nabla(\ve_1)=L(\ve_1)$ and $V^*=\nabla(-\ve_n)=L(-\ve_n)$.
In particular, $T_\lambda^{\lambda'}$ is exact and left and right adjoint to $T_{\lambda'}^\lambda$.
Note that, for $\lambda'\in{\rm Supp}_h(\lambda)$, $h\in\{1,2\}$, and $\mu\in X^+\cap  W_p\cdot\lambda$, $T_\lambda^{\lambda'}\nabla(\mu)$ has a good filtration with sections $\nabla(\nu)$, $\nu\in{\rm Supp}_h(\mu)\cap  W_p\cdot\lambda'$, and the analogue for Weyl modules and Weyl filtrations also holds.

Recall the definition of the set $\Lambda_p$ from Section~\ref{s.JSF}.
\begin{prop}[Translation equivalence]\label{prop.trans_equivalence}
Let $h,\ov h\in\{1,2\}$ be distinct, let $\lambda,\lambda'\in\Lambda_p$ with $\lambda'\in{\rm Supp}_h(\lambda)$ and let $\Lambda\subseteq W_p\cdot\lambda\cap\Lambda_p,\Lambda'\subseteq W_p\cdot\lambda'\cap\Lambda_p$ be $\preceq$-saturated sets. 
Assume
\begin{enumerate}[{\rm (1)}]
\item ${\rm Supp}_h(\nu)\cap W_p\cdot\lambda'\subseteq\Lambda_p$ for all $\nu\in\Lambda$, and ${\rm Supp}_{\ov h}(\nu')\cap  W_p\cdot\lambda\subseteq\Lambda_p$ for all $\nu'\in\Lambda'$.
\item $|{\rm Supp}_h(\nu)\cap W_p\cdot\lambda'|=1=|{\rm Supp}_{\ov h}(\nu')\cap W_p\cdot\lambda|$ for all $\nu\in\Lambda$ and $\nu'\in\Lambda'$.
\item The map $\nu\mapsto\nu':\Lambda\to\Lambda_p$ given by ${\rm Supp}_h(\nu)\cap W_p\cdot\lambda'=\{\nu'\}$ has image $\Lambda'$, and together with its inverse $\Lambda'\to\Lambda$ it preserves the order $\preceq$.
\end{enumerate}
Then $T_\lambda^{\lambda'}$ restricts to an equivalence of categories $\mc C_\Lambda\to\mc C_{\Lambda'}$ with inverse $T_{\lambda'}^\lambda:\mc C_{\Lambda'}\to\mc C_\Lambda$.
Furthermore, with $\nu$ and $\nu'$ as in (3), we have $T_\lambda^{\lambda'}\nabla(\nu)=\nabla(\nu')$, $T_\lambda^{\lambda'}\Delta(\nu)=\Delta(\nu')$, $T_\lambda^{\lambda'}L(\nu)=L(\nu')$, $T_\lambda^{\lambda'}T(\nu)=T(\nu')$ and $T_\lambda^{\lambda'}I_\Lambda(\nu)=I_{\Lambda'}(\nu')$.
\end{prop}

\begin{proof}
The proof is a straightforward simplification of the proof of \cite[Prop~4.1]{LT}: We can work with ordinary instead of refined translation functors. We give it here for convenience of the reader.
The first assertion and the identities involving the induced and Weyl modules are obvious. We have an exact sequence
\begin{align}\label{eq.Delta}0\to M\to\Delta(\nu)\to L(\nu)\to 0\,,\end{align}
where all composition factors $L(\eta)$ of $M$ satisfy $\eta\prec\nu$. Applying $T_\lambda^{\lambda'}$ gives the exact sequence
\begin{align}\label{eq.T_Delta}0\to T_\lambda^{\lambda'}M\to\Delta(\nu')\to T_\lambda^{\lambda'}L(\nu)\to 0\,.\end{align}
Using the order preserving properties of $\nu\mapsto\nu'$ we see that for any $\theta\in\Lambda$ all composition factors $L(\eta')$ of $T_\lambda^{\lambda'}L(\theta)$ satisfy $\eta'\preceq\theta'$. So all composition factors $L(\eta')$ of $T_\lambda^{\lambda'}M$ satisfy $\eta'\prec\nu'$.
Therefore $T_\lambda^{\lambda'}L(\nu)$ must have simple head $L(\nu')$ and all other composition factors $L(\eta')$ satisfy $\eta'\prec\nu'$. If $T_\lambda^{\lambda'}L(\nu)\ne L(\nu')$, then $$\Hom_G(\Delta(\eta),L(\nu))=\Hom_G(T_{\lambda'}^\lambda\Delta(\eta'),L(\nu))=\Hom_G(\Delta(\eta'), T_\lambda^{\lambda'}L(\nu))\ne0$$
for some $\eta\ne\nu$. This is clearly impossible, so $T_\lambda^{\lambda'}L(\nu)=L(\nu')$. We can prove the same for $T_{\lambda'}^\lambda$, and then we can deduce as in the proof \cite[II.7.9]{Jan} that $T_{\lambda'}^\lambda T_\lambda^{\lambda'}\cong\id_{\mc C_\Lambda}$ and $T_\lambda^{\lambda'}T_{\lambda'}^\lambda\cong\id_{\mc C_{\Lambda'}}$. This implies the remaining assertions.
\end{proof}

\begin{prop}[Translation projection]\label{prop.trans_projection}
Let $h,\ov h\in\{1,2\}$ be distinct, let $\lambda,\lambda'\in\Lambda_p$ with $\lambda'\in{\rm Supp}_h(\lambda)$ and let $\Lambda\subseteq W_p\cdot\lambda\cap\Lambda_p,\Lambda'\subseteq W_p\cdot\lambda'\cap\Lambda_p$ be $\preceq$-saturated sets. Put $\tilde\Lambda=\{\nu\in\Lambda\,|\,{\rm Supp}_h(\nu)\cap W_p\cdot\lambda'\ne\emptyset\}$. Assume
\begin{enumerate}[{\rm (1)}]
\item ${\rm Supp}_h(\nu)\cap W_p\cdot\lambda'\subseteq\Lambda_p$ for all $\nu\in\Lambda$, and ${\rm Supp}_{\ov h}(\nu')\cap W_p\cdot\lambda\subseteq\Lambda_p$ for all $\nu'\in\Lambda'$.
\item $|{\rm Supp}_h(\nu)\cap W_p\cdot\lambda'|=1$ for all $\nu\in\tilde\Lambda$, and $|{\rm Supp}_{\ov h}(\nu')\cap W_p\cdot\lambda|=2$ for all $\nu'\in\Lambda'$.
\item The map $\nu\mapsto\nu':\tilde\Lambda\to\Lambda_p$ given by ${\rm Supp}_h(\nu)\cap W_p\cdot\lambda'=\{\nu'\}$ is a 2-to-1 map which has image $\Lambda'$ and preserves the order $\preceq$. For $\nu'\in\Lambda'$ we can write ${\rm Supp}_{\ov h}(\nu')\cap W_p\cdot\lambda=\{\nu^+,\nu^-\}$ with $\nu^-\prec\nu^+$ and then $\Hom_G(\nabla(\nu^+),\nabla(\nu^-))\ne0$ and $\eta'\preceq\nu'\Rightarrow\eta^+\preceq\nu^+$ and $\eta^-\preceq\nu^-$.
\end{enumerate}
Then $T_\lambda^{\lambda'}$ restricts to a functor $\mc C_\Lambda\to\mc C_{\Lambda'}$ and $T_{\lambda'}^\lambda$ restricts to a functor $\mc C_{\Lambda'}\to\mc C_\Lambda$.
Now let $\nu\in\Lambda$. If $\nu\notin\tilde\Lambda$, then $T_\lambda^{\lambda'}\nabla(\nu)= T_\lambda^{\lambda'}\Delta(\nu)= T_\lambda^{\lambda'}L(\nu)=0$.
For $\nu'\in\Lambda'$ with $\nu^\pm$ as in (3), we have $T_\lambda^{\lambda'}\nabla(\nu^\pm)=\nabla(\nu')$, $T_\lambda^{\lambda'}\Delta(\nu^\pm)=\Delta(\nu')$, $T_\lambda^{\lambda'}L(\nu^-)=L(\nu')$, $T_\lambda^{\lambda'}L(\nu^+)=0$, $T_{\lambda'}^\lambda T(\nu')=T(\nu^+)$ and $T_{\lambda'}^\lambda I_{\Lambda'}(\nu')=I_\Lambda(\nu^-)$.
\end{prop}
\begin{proof}
Again, the proof is a straightforward simplification of the proof of \cite[Prop~4.2]{LT}. We give it here for convenience of the reader.
The identities involving the induced and Weyl modules are obvious. Moreover, it is also clear that $T_\lambda^{\lambda'}L(\nu)=0$ when $\nu\notin\tilde\Lambda$, since $T_\lambda^{\lambda'}\Delta(\nu)$ surjects onto $T_\lambda^{\lambda'}L(\nu)$ and is $0$. If $\eta\prec\nu^-$, then $\eta'\prec\nu'$, 
so we obtain $T_\lambda^{\lambda'}L(\nu^-)=L(\nu')$ as in the proof of Proposition~\ref{prop.trans_equivalence}.
Now consider \eqref{eq.Delta} and \eqref{eq.T_Delta} for $\nu=\nu^+$. Since $[\Delta(\nu^+):L(\nu^-)]=[\nabla(\nu^+):L(\nu^-)]\ne0$, we know that $L(\nu^-)$ occurs in $M$. So $T_\lambda^{\lambda'}L(\nu^-)=L(\nu')$ occurs in $T_\lambda^{\lambda'}M$ and therefore not in $T_\lambda^{\lambda'}L(\nu^+)$.
If $T_\lambda^{\lambda'}L(\nu^+)\ne0$, then it would have simple head $L(\nu')$ by \eqref{eq.T_Delta}. So $T_\lambda^{\lambda'}L(\nu^+)=0$.
Note that ${\rm ch}\, T_\lambda^{\lambda'} T_{\lambda'}^\lambda M=2{\rm ch}\,M$ for any $M\in\mc C_{\Lambda'}$ which has a good or Weyl filtration. Now the equality $T_{\lambda'}^\lambda T(\nu')=T(\nu^+)$ is proved as in \cite[E.11]{Jan}, replacing $\uparrow, w\cdot\lambda, ws\cdot\lambda, w\cdot\mu, T_\lambda^\mu$ and $T_\mu^\lambda$ by $\preceq, \nu^+, \nu^-, \mu,  T_\lambda^{\lambda'}$ and $T_{\lambda'}^\lambda$. Finally,
\begin{align}\label{eq.I}\Hom_{\mc C_\Lambda}(-, T_{\lambda'}^\lambda I_{\Lambda'}(\nu'))=\Hom_{\mc C_{\Lambda'}}(-,I_{\Lambda'}(\nu'))\circ  T_\lambda^{\lambda'}\end{align}
is exact, so $T_{\lambda'}^\lambda I_{\Lambda'}(\nu')$ is injective in $\mc C_\Lambda$. Applying both sides of \eqref{eq.I} to $L(\eta)$, for $\eta\notin\tilde\Lambda$, for $\eta=\eta^+$ and for $\eta=\eta^-$, shows that $T_{\lambda'}^\lambda I_{\Lambda'}(\nu')$ has simple socle $L(\nu^-)$ and therefore equals $I_\Lambda(\nu^-)$.
\end{proof}

\begin{rem}\label{rem.translation}
The translated weight $\lambda'$ need not be in the facet closure of $\lambda$. For example, when $p=5$, $n=5$, $s_1=s_2=1$ and $(\lambda,\lambda')=([4,4],[3,4])$ or $([3,4],[2,4])$, then it is easy to find affine reflection hyperplanes which contain $\lambda$, but not $\lambda'$. 
However, we can, for $\Lambda=\{[4,4],[2,2]\}$ and $\Lambda'=\{[3,4],[2,3]\}$, apply Proposition~\ref{prop.trans_equivalence} in the first case, and, for $\Lambda=\{[3,4],[2,3]\}$ and $\Lambda'=\{[2,4]\}$, apply Proposition~\ref{prop.trans_projection} in the second case. We refer to Section~\ref{s.arrow_diagrams} for how to express this in terms of arrow diagrams. 
\end{rem}

\section{Arrow diagrams}\label{s.arrow_diagrams}
This section is based on the approaches of \cite{CdV} and \cite{Sh}. We use ``characteristic $p$ walls" as in \cite{Sh}. Recall the definition of $\rho$ from Section~\ref{s.prelim} and recall from Section~\ref{s.JSF} that $i'=n+1-i$. 
An arrow diagram has $p$ \emph{nodes} on a (horizontal) \emph{line} with $p$ \emph{labels}: $0,\ldots,p-1$. The $i$-th node from the left has label $i-1$.
Although $0$ and $p-1$ are not connected we consider them as neighbours and we will identify a diagram with any of its cyclic shifts. So when we are going to the left through the nodes we get $p-1$ after $0$ and when we are going to the right we get $0$ after $p-1$.
Next we choose $s_1,s_2\in\{1,\ldots,\min(n,p)\}$ with $s_1+s_2\le n$ and put a wall below the line between $\rho_{s_1}$ and $\rho_{s_1}-1$ mod $p$, and a wall above the line between $\rho_{s_2'}=s_2$ and $s_2+1$ mod $p$. Then we can also put in a top and bottom \emph{value} for each label. A value and its corresponding label are always equal mod $p$. Below the line we start with $\rho_{s_1}$ immediately to the right of the wall, and then increasing in steps of $1$ going to the right: $\rho_{s_1},\rho_{s_1}+1,\ldots,\rho_{s_1}+p-1$. Above the line we start with $\rho_{s_2'}=s_2$ immediately to the left of the wall, and then decreasing in steps of $1$ going to the left: $s_2,s_2-1,\ldots,s_2-p+1$.
For example, when $p=5$, $n=5$ and $s_1=s_2=1$, then $\rho_{s_1}=s_1'=5$, $\rho_{s_2'}=s_2=1$ and we have labels
$$
\resizebox{3.2cm}{.5cm}{\xy
(0,0)="a1"*{\bullet};(0,-4.5)*{0};
(8.5,0)="a2"*{\bullet};(8.5,-4.5)*{1};(12.8,3)*{\rule[0mm]{.3mm}{6mm}};
(17,0)="a3"*{\bullet};(17,-4.5)*{2};
(25.5,0)="a4"*{\bullet};(25.5,-4.5)*{3};
(34,0)="a5"*{\bullet};(34,-4.5)*{4};(38,-3)*{\rule[0mm]{.3mm}{6mm}};
{"a1";"a5"**@{-}}; 
\endxy}\ \ \,
$$
and values
$$
\resizebox{3.2cm}{.5cm}{\xy
(0,0)="a1"*{\bullet};(0,4.5)*{0};(0,-4.5)*{5};
(8.5,0)="a2"*{\bullet};(8.5,4.5)*{1};(8.5,-4.5)*{6};(12.8,3)*{\rule[0mm]{.3mm}{6mm}};
(17,0)="a3"*{\bullet};(17,4.5)*{-3};(17,-4.5)*{7};
(25.5,0)="a4"*{\bullet};(25.5,4.5)*{-2};(25.5,-4.5)*{8};
(34,0)="a5"*{\bullet};(34,4.5)*{-1};(34,-4.5)*{9};(38,-3)*{\rule[0mm]{.3mm}{6mm}};
{"a1";"a5"**@{-}}; 
\endxy}\ \ .
$$
For a dominant weight $\lambda=[\lambda^1,\lambda^2]$ with $l(\lambda^h)\le s_h\le p-\lambda^h_1$ for all $h\in\{1,2\}$ we now form the ($(s_1,s_2)$-)\emph{arrow diagram} by putting in $s_1$ arrows below the line ($\land$) that point \emph{from} the values $(\rho+\lambda)_1,\ldots,(\rho+\lambda)_{s_1}$, or from the corresponding labels, and $s_2$ arrows above the line ($\vee$) that point from the values $(\rho+\lambda)_{1'},\ldots,(\rho+\lambda)_{s_2'}$, or to the corresponding labels.
So in the above example the arrow diagram of $\lambda=[4,4]$ is
$$
\resizebox{3.2cm}{.5cm}{\xy
(0,0)="a1"*{\bullet};(0,-4.5)*{0};
(8.5,0)="a2"*{\bullet};(8.5,-4.5)*{1};(12.8,3)*{\rule[0mm]{.3mm}{6mm}};
(17,0)="a3"*{\bullet};(17,-4.5)*{2};(17,1.5)*{\vee};
(25.5,0)="a4"*{\bullet};(25.5,-4.5)*{3};
(34,0)="a5"*{\bullet};(34,-4.5)*{4};(34,-1.5)*{\land};(38,-3)*{\rule[0mm]{.3mm}{6mm}};
{"a1";"a5"**@{-}}; 
\endxy}\ \ .
$$
In such a diagram we frequently omit the nodes and/or the labels. When it has already been made clear what the labels are and where the walls are, we can simply represent the arrow diagram by a string of single arrows ($\land$, $\vee$), opposite pairs of arrows ($\times$) and symbols ${\rm o}$ to indicate the absence of an arrow. In the above example $\lambda=[4,4]$ is then represented by ${\rm oo}\nts\vee\nts{\rm o}\land$ and $\lambda=[2,4]$ is represented by ${\rm oo}\nts\times\nts{\rm oo}$.

We can form the arrow diagram of $\lambda$ as follows. First line up $s_1$ arrows immediately to the right of the wall below the line and then move them to the right to the correct positions. The arrow furthest from the wall corresponds to $\lambda^1_1$, and the arrow closest to the wall corresponds to $\lambda^1_{s_1}$. Then line up $s_2$ arrows immediately to the left of the wall above the line and then move them to the left to the correct positions. The arrow furthest from the wall corresponds to $\lambda^2_1$, and the arrow closest to the wall corresponds to $\lambda^2_{s_2}$.

The part of $\lambda^1$ corresponding to an arrow below the line equals the number of nodes without a $\land$ from that arrow to the wall going to the left.
From the diagram you can see what you can do with the wall below the line, changing $s_1$ but not $\lambda$:
If there is an arrow immediately to the right of the wall, i.e. $l(\lambda^1)<s_1$, then you can move that wall one step to the right, removing the arrow that you move it past.
If there is no arrow immediately to the left of the wall, i.e. $\lambda^1_1<p-s_1$, then you can move the wall one step to the left, putting a $\land$ at the node that you move it past, provided $s_1<n-s_2$. The analogous assertions for the wall above the line are obtained by replacing ``right", ``left", $\lambda^1$, $\land$, $s_1$ and $s_2$ by ``left", ``right", $\lambda^2$, $\vee$, $s_2$ and $s_1$.

More generally, we can for any $s_1,s_2\in\{1,\ldots,n\}$ with $s_1+s_2\le n$ and $\mu\in X^+$ with $l(\mu^h)\le s_h$ for all $h\in\{1,2\}$, put $s_1$ arrows below the line in the diagram pointing from the labels equal to $(\rho+\mu)_1,\ldots,(\rho+\mu)_{s_1}$ mod $p$, and $s_2$ arrows above the line in the diagram pointing to the labels equal to $(\rho+\mu)_{1'},\ldots,(\rho+\mu)_{s_2'}$ mod $p$, allowing repeated $\vee$'s or $\land$'s at a node. Then $\mu$ and $\nu$ with $l(\mu^h),l(\nu^h)\le s_h$ for all $h\in\{1,2\}$ are $W_p$-conjugate under the dot action if and only if $|\mu|=|\nu|$ and the arrow diagrams of $\mu$ and $\nu$ have the same number of arrows at each node, if and only if $|\mu|=|\nu|$ and the arrow diagram of $\nu$ can be obtained from that of $\mu$ by choosing a certain number of $\land$'s and an equal number of $\vee$'s and replacing all these arrows by their opposites.

From now on $s_1,s_2\in\{1,\ldots,\min(n,p)\}$ with $s_1+s_2\le n$, unless stated otherwise. We put
$$\Lambda(s_1,s_2)=\{\lambda\in X^+\,|\,l(\lambda^h)\le s_h\le p-\lambda^h_1\text{\ for all\ }h\in\{1,2\}\}\,.$$
Unless stated otherwise, we assume $\lambda\in\Lambda(s_1,s_2)$.

When we speak of ``arrow pairs" it is understood that both arrows are single, i.e. neither of the two arrows is part of an $\times$. So, for example, at the node of the first arrow in an arrow pair $\vee\land$ there should not also be a $\land$. The arrows need not be consecutive in the diagram.

We now define the \emph{cap diagram} $c_\lambda$ of the arrow diagram associated to $\lambda$ as follows. We assume that the arrow diagram is cyclically shifted such that at least one of the walls is between the first and last node. We select one such wall and when we speak of ``the wall" it will be the other wall. All caps are anti-clockwise, starting from the rightmost node. We start on the left side of the wall. We form the caps recursively. Find an arrow pair $\vee\land$ that are neighbours in the sense that the only arrows in between are already connected with a cap or are part of an $\times$, and connect them with a cap. Repeat this until there are no more such arrow pairs. Now the unconnected arrows that are not part of an $\times$ form a sequence $\land\cdots\land\vee\cdots\vee$. Note that none of these arrows occur inside a cap. The caps on the right side of the wall are formed in the same way.
For example, when $p=17$, $n=20$, $s_1=8$, $s_2=7$ and $\lambda=[9654^22,8^243^22]$, then
$c_\lambda$ is
$$\xy
(0,0)="a1";(0,-1)*{\land};(0,-4)*{13};
(5,0)="a2";(5,-1)*{\land};
(10,0)="a3";
(15,0)="a4";(15,1)*{\vee};(15,-4)*{16};
(20,0)="a5";(20,-1)*{\land};(20,-4)*{0};
(25,0)="a6";(25,1)*{\vee};
(30,0)="a7";(30,1)*{\vee};
(35,0)="a8";(35,-1)*{\land};
(40,0)="a9";(40,-1)*{\land};(40,1)*{\vee};
(45,0)="a10";
(50,0)="a11";(50,-1)*{\land};
(55,0)="a12";(55,1)*{\vee};(57.5,3)*{\rule[0mm]{.3mm}{6mm}};
(60,0)="a13";(60,-1)*{\land};
(65,0)="a14";
(70,0)="a15";(70,1)*{\vee};
(75,0)="a16";(75,1)*{\vee};
(80,0)="a17";(80,-1)*{\land};(80,-4)*{12};(82.5,-3)*{\rule[0mm]{.3mm}{6mm}};
{"a1";"a17"**@{-}}; 
"a5";"a4"**\crv{(20,6)&(15,6)};
"a8";"a7"**\crv{(35,6)&(30,6)};
"a11";"a6"**\crv{(50,8)&(25,11)};
"a17";"a16"**\crv{(80,6)&(75,6)};
\endxy\ .$$
Note that the nodes with labels $5,9,15$ have no arrow.

\begin{lem}\label{lem.JSF-arrows}
Let $\lambda\in\Lambda(s_1,s_2)$. Assume that the arrow diagram of $\lambda$ is cyclically shifted such that at least one of the walls is between the first and last node.
\begin{enumerate}[{\rm(i)}]
\item The nonzero terms in the reduced Jantzen Sum Formula associated to $\lambda$ correspond in the arrow diagram of $\lambda$ to the arrow pairs $\vee\land$ to the left or to the right of the wall.
\item $\Delta(\lambda)$ is irreducible (equivalently, $T(\lambda)=\Delta(\lambda)$ or $\nabla(\lambda)$) if and only if there are no caps in $c_\lambda$.
\item If $\mu$ is obtained from $\lambda$ by reversing the arrows in a pair as in (i) with consecutive arrows (no single arrows in between), then $\dim\Hom_G(\nabla(\lambda),\nabla(\mu))$ $=[\Delta(\lambda):L(\mu)]\ne0$.
\end{enumerate}
\end{lem}

\begin{proof}
We will work with the ``unshifted"  diagram, so the leftmost node has label $0$.
When $s_2=p$, then there are no single $\land$'s and $\lambda^2=0$, so the reduced sum is empty and the assertion is trivially true. So we assume $\rho_{s_2'}=s_2<p$.
Write $\rho_{s_1}=x_1+up$ with $0\le x_1<p$, $u\ge0$. The general form of a value above the line is
$
\begin{smallmatrix}
x&\rule[-.7mm]{.3mm}{3mm}&x-p\\
\hline
\end{smallmatrix}
$
and below the line it is
$
\begin{smallmatrix}
\hline\\
x+(u+1)p&\rule[-.7mm]{.3mm}{3mm}&x+up
\end{smallmatrix}
$.
Here $x$ always satisfies $0\le x<p$. Note that the ``opposite" value on the other side of the line has the same $x$ in its general form.
Put differently, the label corresponding to the value is $x$.
Now let $\alpha=\ve_i-\ve_j$, $1\le i\le l(\lambda^1)$, $n-l(\lambda^2)<j\le n$, and $l,a\ge1$ such that $\la\lambda+\rho,\alpha^\vee\ra=a+lp$ and $\chi(s_{\alpha,l}\cdot\lambda)\ne0$.
Put $c=(\lambda+\rho)_i$ and $d=(\lambda+\rho)_j$. Note that $c\ne d$ mod $p$, because otherwise we would have $a=0$. Assume that the wall above the line is to the left of or above the wall below the line ($x_1>s_2$). Then the 12 candidate configurations of $c$ and $d$ in the arrow diagram of $\lambda$ are:
$$
%
%
\begin{smallmatrix}
d\ &\rule{.3mm}{2mm}\ \ &\vspace{.7mm}\\
\hline\\
\ c\nts&\ \ \rule{.3mm}{2mm}&\ 
\end{smallmatrix}\,,
\begin{smallmatrix}
\ d\nts&\rule{.3mm}{2mm}\ \ &\vspace{.7mm}\\
\hline\\
c\ &\ \ \rule{.3mm}{2mm}&\ 
\end{smallmatrix}\,,
\begin{smallmatrix}
d&\rule{.3mm}{2mm}\ \ &\vspace{.7mm}\\
\hline\\
&\ c\ \nts\rule{.3mm}{2mm}&\ 
\end{smallmatrix}\,,
\begin{smallmatrix}
d&\rule{.3mm}{2mm}\ \ &\vspace{.7mm}\\
\hline\\
&\ \ \rule{.3mm}{2mm}&c
\end{smallmatrix}\,,
%
%
\begin{smallmatrix}
\ &\rule{.3mm}{2mm}\,d\ &\vspace{.7mm}\\
\hline\\
c&\ \ \rule{.3mm}{2mm}&\ 
\end{smallmatrix}\,,
\begin{smallmatrix}
\ &\rule{.3mm}{2mm}\,d\ \ &\vspace{.7mm}\\
\hline\\
&\ \ c\,\rule{.3mm}{2mm}&\ 
\end{smallmatrix}\,,
\begin{smallmatrix}
\ &\rule{.3mm}{2mm}\ \ d\,&\vspace{.7mm}\\
\hline\\
&\,c\ \ \rule{.3mm}{2mm}&\ 
\end{smallmatrix}\,,
\begin{smallmatrix}
\ &\rule{.3mm}{2mm}\,d\ &\vspace{.7mm}\\
\hline\\
&\ \ \rule{.3mm}{2mm}&\nts c
\end{smallmatrix}\,,$$
$$
%
%
\begin{smallmatrix}
\ &\rule{.3mm}{2mm}\ \ &\Nts d\vspace{.7mm}\\
\hline\\
c&\ \ \rule{.3mm}{2mm}&\ 
\end{smallmatrix}\,,
\begin{smallmatrix}
\ &\rule{.3mm}{2mm}\ \ &\Nts d\vspace{.7mm}\\
\hline\\
&\ c\ \rule{.3mm}{2mm}&\ 
\end{smallmatrix}\,,
\begin{smallmatrix}
\ &\rule{.3mm}{2mm}\ \ &\Nts\Nts d\vspace{.7mm}\\
\hline\\
&\ \ \rule{.3mm}{2mm}&\ c
\end{smallmatrix}\,,
\begin{smallmatrix}
\ &\rule{.3mm}{2mm}\ \ &\ d\vspace{.7mm}\\
\hline\\
&\ \ \rule{.3mm}{2mm}&\Nts\Nts c
\end{smallmatrix}\,.
$$
Here it is understood that the opposite values of $c$ and $d$ are not present in the diagram of $\lambda+\rho$, since otherwise $s_{\alpha,l}(\lambda+\rho)$ would contain a repeat and $\chi(s_{\alpha,l}\cdot\lambda)$ would be $0$.
Now it is easy to see that the only possible configurations are 1,6,9 and 11:
$
\begin{smallmatrix}
d\ &\rule{.3mm}{2mm}\ \ &\vspace{.7mm}\\
\hline\\
\ c\nts&\ \ \rule{.3mm}{2mm}&\ 
\end{smallmatrix}\,,
\begin{smallmatrix}
\ &\rule{.3mm}{2mm}\,d\ \ &\vspace{.7mm}\\
\hline\\
&\ \ c\,\rule{.3mm}{2mm}&\ 
\end{smallmatrix}\,,
\begin{smallmatrix}
\ &\rule{.3mm}{2mm}\ \ &\Nts d\vspace{.7mm}\\
\hline\\
c&\ \ \rule{.3mm}{2mm}&\ 
\end{smallmatrix}\,,
\begin{smallmatrix}
\ &\rule{.3mm}{2mm}\ \ &\Nts\Nts d\vspace{.7mm}\\
\hline\\
&\ \ \rule{.3mm}{2mm}&\ c
\end{smallmatrix}\,,
$
which correspond precisely to the arrow pairs from the assertion.
For example, for configuration 1 we have $c=x+(u+1)p, d=y$ with $0\le y<x<p$. So $a=x-y$, $l=u+1$, and $s_{\alpha,l}(\lambda+\rho)$ equals $y+(u+1)p$ in position $i$ and $x$ in position $j$. Since these are the available values for the labels $y,x$, this configuration is possible.
Next, for configuration 2 we have $c=x+(u+1)p, d=y$ with $0\le x<y<p$. So $a=p-(y-x)$, $l=u$, and $s_{\alpha,l}(\lambda+\rho)$ equals $y+up$ in position $i$ and $x+p$ in position $j$. However, the available values for the labels $y,x$ are $y+(u+1)p$ and $x$. So this configuration is not possible.
As a final example, for configuration 9 we have $c=x+(u+1)p, d=y-p$ with $0\le y<x<p$. So $a=p-(y-x)$, $l=u+1$, and $s_{\alpha,l}(\lambda+\rho)$ equals $y+up$ in position $i$ and $x$ in position $j$. Since these are the available values for the labels $y,x$, this configuration is possible.
The case that the wall above the line is to the right of or above the wall below the line ($x_1\le s_2+1$) is completely analogous.

Conversely, it is clear that if $(\alpha,l)$ corresponds to one of the stated arrow pairs, then the first $l(\lambda^1)$ entries of $s_{\alpha,l}(\lambda+\rho)$ are distinct and $>n-l(\lambda^1)$ and the last $l(\lambda^2)$ entries are distinct and $\le l(\lambda^2)$, so $\chi(s_{\alpha,l}\cdot\lambda)\ne0$.\\
(ii). This follows easily from (i): there is an arrow pair $\vee\land$ to the left of the wall if and only if there is a cap to the left of the wall in $c_\lambda$ (although there will in general be more such pairs than such caps).\\
(iii). Such a $\mu$ is maximal amongst the weights $\nu$ for which (a nonzero multiple of) $\chi(\nu)$ occurs on the RHS of the reduced Jantzen Sum Formula, so this follows from Proposition~\ref{prop.linkage}(iii).
\end{proof}

\begin{rems}\label{rems.preceq}
1.\ Let $s_1,s_2\in\{1,\ldots,\min(n,p)\}$ with $s_1+s_2\le n$ and let $\lambda\in\Lambda(s_1,s_2)$ and $\mu\in X^+$.
Assume that the nodes are cyclically shifted such that at least one of the walls determined by $s_1$ and $s_2$ is between the first and last node.
Then it follows from the above lemma that $\mu\preceq\lambda$ if and only if $\mu\in\Lambda(s_1,s_2)$ and the arrow diagram of $\mu$ can be obtained from that of $\lambda$ by repeatedly replacing an arrow pair $\vee\land$ to the left or to the right of the wall, by the opposite arrow pair.

Furthermore, $\lambda,\mu\in\Lambda(s_1,s_2)$ are conjugate under the dot action of $W_p$ if and only if the arrow diagram of $\mu$ is obtained from that of $\lambda$
by choosing a certain number of (single) $\land$'s and an equal number of $\vee$'s to the left of the wall and choosing a certain number of $\land$'s and an equal number of $\vee$'s to the right of the wall and then replacing all these arrows by their opposites. This follows by combining our earlier characterisation of $W_p$-conjugacy under the dot action with a computation of the change in coordinate sum in terms of the number of arrows of each general form from the proof of the above lemma.\\ 
2.\ The $l$-values corresponding to the configurations 1,6,9 and 11 from the proof are $u+1,u+2,u+1,u+1$.
The possible configurations when the wall above the line is to the right of or above the wall below the line are:
$
\begin{smallmatrix}
d\ &\ \ \rule{.3mm}{2mm}&\vspace{.7mm}\\
\hline\\
\ c\nts&\rule{.3mm}{2mm}\ \ &\ 
\end{smallmatrix}\,,
\begin{smallmatrix}
\ &\,d\ \ \rule{.3mm}{2mm}&\vspace{.7mm}\\
\hline\\
&\rule{.3mm}{2mm}\ \ c\,&\ 
\end{smallmatrix}\,,
\begin{smallmatrix}
\ &\ \ \rule{.3mm}{2mm}&\nts d\vspace{.7mm}\\
\hline\\
c&\rule{.3mm}{2mm}\ \ &\ 
\end{smallmatrix}
$,
$
\begin{smallmatrix}
\ &\ \ \rule{.3mm}{2mm}&\Nts\nts d\vspace{.7mm}\\
\hline\\
&\rule{.3mm}{2mm}\ \ &\ c
\end{smallmatrix}\,,
$
with $l$-values $u+1,u,u+1,u+1$.
So in the reduced Jantzen Sum Formula associated to $\lambda$ we only have two possible $l$-values.
\end{rems}

\section{Weyl filtration multiplicities in tilting modules}\label{s.filt_mult}
Let $s_1,s_2\in\{1,\ldots,\min(n,p)\}$ with $s_1+s_2\le n$. Recall the definition of the set $\Lambda(s_1,s_2)$ from Section~\ref{s.arrow_diagrams}.
Assume that the nodes are cyclically shifted such that at least one of the walls determined by $s_1$ and $s_2$ is between the first and last node.
Recall that we fix one such wall and that ``the wall" will always refer to the other wall.
Let $\lambda\in\Lambda(s_1,s_2)$, and let $\mu\in X^+$ with $\mu\preceq\lambda$. Then the arrow diagram of $\mu$ has its single arrows and its $\times$'s at the same nodes as the arrow diagram of $\lambda$.
We know, by Remark~\ref{rems.preceq}.1, that the arrow diagram of $\mu$ can be obtained from that of $\lambda$ by repeatedly replacing an arrow pair $\vee\land$ to the left or to the right of the wall by the opposite arrow pair.

Recall the definition of the cap diagram $c_\lambda$ from the previous section. We now define the \emph{cap diagram} $c_{\lambda\mu}$ associated to $\lambda$ \emph{and $\mu$} by replacing each arrow in $c_\lambda$ by the arrow from the arrow diagram of $\mu$ at the same node. Put differently, we put the caps from $c_\lambda$ on top of the arrow diagram of $\mu$. We say that $c_{\lambda\mu}$ is \emph{oriented} if all caps in $c_{\lambda\mu}$ are oriented (clockwise or anti-clockwise). It is not hard to show that when $c_{\lambda\mu}$ is oriented, the arrow diagrams of $\lambda$ and $\mu$ are the same at the nodes which are not endpoints of a cap in $c_\lambda$.
For example, when $p=5$, $n=7$, $s_1=2$, $s_2=3$ and $\lambda=[32,21^2]$. Then $\rho_{s_1}=s_1'=6$, and $c_\lambda$ (shifted) is
$$\xy
(0,0)="a1";(0,1)*{\vee};(0,-4)*{1};
(5,0)="a2";(5,1)*{\vee};(5,-4)*{2};
(10,0)="a3";(10,-1)*{\land};(10,-4)*{3};(12.5,3)*{\rule[0mm]{.3mm}{6mm}};
(15,0)="a4";(15,1)*{\vee};(15,-4)*{4};
(20,0)="a5";(20,-1)*{\land};(20,-4)*{0};(22.5,-3)*{\rule[0mm]{.3mm}{6mm}};
{"a1";"a5"**@{-}}; 
"a3";"a2"**\crv{(10,6)&(5,6)}; 
"a5";"a4"**\crv{(20,6)&(15,6)} 
\endxy\ .$$
The $\mu\in X^+$ with $\mu\prec\lambda$ are $[2^2,1^3],[31,21],[21,1^2],[3,2],[2,1]$,
with (shifted) arrow diagrams $\vee\vee\land\land\vee, \vee\land\vee\vee\land, \vee\land\vee\land\vee, \land\vee\vee\vee\land, \land\vee\vee\land\vee$.
Only for the first three $c_{\lambda\mu}$ is oriented. For the first two of these $c_{\lambda\mu}$ has one clockwise cap and for the third both caps are clockwise.

\begin{thm}\label{thm.filt_mult}
Let $s_1,s_2\in\{1,\ldots,\min(n,p)\}$ with $s_1+s_2\le n$, $\lambda\in\Lambda(s_1,s_2)$ and $\mu\in X^+$. Then
$$(T(\lambda):\nabla(\mu))=(T(\lambda):\Delta(\mu))=
\begin{cases}
1\text{\ if $\mu\preceq\lambda$ and $c_{\lambda\mu}$ is oriented,}\\
0\text{\ otherwise.}
\end{cases}
$$
\end{thm}
\begin{proof}
By Proposition~\ref{prop.linkage}(ii) we may assume $\mu\preceq\lambda$. The proof is similar to the proof of \cite[Thm~6.1]{LT}, but it is easier, since we only work with caps. The proof is by induction on the number of caps in $c_\lambda$. If there are no caps in $c_\lambda$, then $c_{\lambda\mu}$ is oriented if and only if $\lambda=\mu$, so the result follows from Lemma~\ref{lem.JSF-arrows}(ii). Otherwise, we choose a cap which has no cap inside it. We will transform this cap to a cap with consecutive end nodes via a sequence of moves which preserve the orientedness of $c_{\lambda\mu}$ and the multiplicity $(T(\lambda):\Delta(\mu))$. We will always move the right end node of the cap one step towards the other end node. In the proof below we will make use of two basic facts. Let $t_1,t_2\in\{1,\ldots,n\}$ with $t_1+t_2\le n$. Firstly, if $\nu\in X^+$ and $\nu'\in{\rm Supp}_h(\nu)$, $h\in\{1,2\}$, with $l(\nu^i),l(\nu'{}^i)\le t_i$ for all $i\in\{1,2\}$, then the $(t_1,t_2)$-arrow diagram of $\nu'$ is obtained from that of $\nu$ by moving one arrow in the $(t_1,t_2)$-arrow diagram of $\nu$ one step: to the right if $h=1$ and to the left if $h=2$. Secondly, if $\nu\in X^+$ and $\nu'\in X^+\cap W_p\cdot\nu$ with $l(\nu^i),l(\nu'{}^i)\le t_i$ for all $i\in\{1,2\}$, then the $(t_1,t_2)$-arrow diagrams of $\nu$ and $\nu'$ have the same number of arrows at each node.

First we prove a general property of the moves we will make. Let $\lambda\in\Lambda(s_1,s_2)$ and $\lambda'\in{\rm Supp}_h(\lambda)\cap\Lambda(s_1,s_2)$, $h\in\{1,2\}$, such that the move $\lambda\mapsto\lambda'$ does not cross or pass a wall. Now let $\nu\in\Lambda(s_1,s_2)\cap W_p\cdot\lambda$ and $\nu'\in{\rm Supp}_h(\nu)\cap W_p\cdot\lambda'$. We show that $\nu'\in\Lambda(s_1,s_2)$. The move from the arrow diagram of $\nu$ to that of $\nu'$ goes between the same nodes as the move $\lambda\mapsto\lambda'$. Assume $l(\nu'{}^1)=s_1+1$. Then $l(\nu^1)=s_1<n-s_2$ and there is no $\land$ immediately to the right of the wall below the line. We temporarily move this wall one step to the left creating a new $\land$ immediately to the right of the new wall.\footnote{At the node of the new $\land$ there may be one other $\land$ and there may be a cap of $c_\lambda$ passing or crossing the new wall.} The move from the arrow diagram of $\nu$ to that of $\nu'$ would move this new arrow one step to the right and therefore cross the original wall. But then the move $\lambda\mapsto\lambda'$ would also cross or pass the original wall. 
This is impossible, so $l(\nu')\le s$. If $\nu'{}^1_1=p-s_1+1$, then $\nu_1=p-s_1$ and the move $\nu\mapsto\nu'$ would pass or cross the wall. This would then also hold for the move $\lambda\mapsto\lambda'$ which is impossible. So $l(\nu'{}^1)\le s_1\le p-\nu'{}^1_1$. The proof that $l(\nu'{}^2)\le s_2\le p-\nu'{}^2_1$ is completely analogous. We conclude that $\nu'\in\Lambda(s_1,s_2)$.

From now on we assume that the nodes are cyclically shifted such that at least one of the walls determined by $s_1$ and $s_2$ is between the first and last node.
When, for a label $a$, we write $a-1$ this is understood to be $p-1$ when $a=0$.
If $\lambda=
\xy
(0,0)="a1";(0,0)*{\cdots};
(5,0)="a2";(5,1)*{\vee};
(10,0)="a3";(10,0)*{\cdots};
(15,0)="a4";(15,0)*{\bullet};
(20,0)="a5";(20,-1)*{\land};(20,-4)*{a};
(25,0)="a6";(25,0)*{\cdots};(25,-6.5)*{\ }; 
"a5";"a2"**\crv{(20,6)&(5,8)}; 
\endxy$,
we choose
$\lambda'=
\xy
(0,0)="a1";(0,0)*{\cdots};
(5,0)="a2";(5,1)*{\vee};
(10,0)="a3";(10,0)*{\cdots};
(15,0)="a4";(15,-1)*{\land};
(20,0)="a5";(20,0)*{\bullet};(20,-3)*{a};
(25,0)="a6";(25,0)*{\cdots};(25,-6.5)*{\ }; 
"a4";"a2"**\crv{(15,6)&(5,7)}; 
\endxy\in{\rm Supp}_2(\lambda)$,
and we put $\Lambda=\Lambda(s_1,s_2)\cap W_p\cdot\lambda$ and $\Lambda'=\Lambda(s_1,s_2)\cap W_p\cdot\lambda'$.
Let $\nu\in\Lambda$. Assume $\nu'\in{\rm Supp}_2(\nu)\cap W_p\cdot\lambda'$. Then we have seen that $\nu'\in\Lambda(s_1,s_2)$.
Moreover, the move $\nu\mapsto\nu'$ moves the arrow at the $a$-node to the $(a-1)$-node.
So the property $\nu'\in{\rm Supp}_2(\nu)\cap W_p\cdot\lambda'$ determines a map $\nu\mapsto\nu':\Lambda\to\Lambda(s_1,s_2)$ given by
\parbox[c][.9cm]{3.2cm}{$\begin{smallmatrix}
\cdots&{\rm o}&\land&\cdots&\mapsto&\cdots&\land&{\rm o}&\cdots\\
\cdots&{\rm o}&\vee&\cdots&\mapsto&\cdots&\vee&{\rm o}&\cdots\\
&&a&&&&&a&
\end{smallmatrix}
$}.
This map clearly preserves the order $\preceq$ and $W_p$-conjugacy (under the dot action), so it has its image in $\Lambda'$.
Similarly, the property $\nu\in {\rm Supp}_1(\nu')\cap W_p\cdot\lambda$ determines a map $\nu'\mapsto\nu:\Lambda'\to\Lambda(s_1,s_2)$ given by reading the above rule in the opposite direction and this map preserves $\preceq$ and $W_p$-conjugacy. So these maps are each others inverse and Proposition~\ref{prop.trans_equivalence} gives that $(T(\lambda):\Delta(\mu))=(T(\lambda'):\Delta(\mu'))$. Furthermore, since $\times$'s and empty nodes don't really play a role in the cap diagram, it is obvious that $c_{\lambda'\mu'}$ is oriented if and only if $c_{\lambda\mu}$ is oriented.
When $\lambda=
\xy
(0,0)="a1";(0,0)*{\cdots};
(5,0)="a2";(5,1)*{\vee};
(10,0)="a3";(10,0)*{\cdots};
(15,0)="a4";(15,1)*{\vee};(15,-1)*{\land};
(20,0)="a5";(20,-1)*{\land};(20,-4)*{a};
(25,0)="a6";(25,0)*{\cdots}; 
"a5";"a2"**\crv{(20,6)&(5,8)}; 
\endxy\ ,$
we choose
$\lambda'=
\xy
(0,0)="a1";(0,0)*{\cdots};
(5,0)="a2";(5,1)*{\vee};
(10,0)="a3";(10,0)*{\cdots};
(15,0)="a4";(15,-1)*{\land};
(20,0)="a5";(20,1)*{\vee};(20,-1)*{\land};(20,-4)*{a};
(25,0)="a6";(25,0)*{\cdots}; 
"a4";"a2"**\crv{(15,6)&(5,7)}; 
\endxy\in{\rm Supp}_1(\lambda)$.
We define $\Lambda$ and $\Lambda'$ as before and similar arguments as above give a bijection $\Lambda\to\Lambda'$ given by
\parbox[c][.9cm]{3.2cm}{$\begin{smallmatrix}
\cdots&\times&\land&\cdots&\mapsto&\cdots&\land&\times&\cdots\\
\cdots&\times&\vee&\cdots&\mapsto&\cdots&\vee&\times&\cdots\\
&&a&&&&&a&
\end{smallmatrix}
$}
with the same properties as before.
In this case we move a unique arrow from the $(a-1)$-node to the $a$-node to go from $\nu$ to $\nu'$, although we think of the move as the arrow at the $(a-1)$-node moving past the $\times$.
So in this case Proposition~\ref{prop.trans_equivalence} again gives that $(T(\lambda):\Delta(\mu))=(T(\lambda'):\Delta(\mu'))$. Furthermore, we again have that $c_{\lambda'\mu'}$ is oriented if and only if $c_{\lambda\mu}$ is oriented.

Now we are reduced to the case that the cap has consecutive end nodes. So $\lambda=
\xy
(0,0)="a1";(0,0)*{\cdots};
(5,0)="a2";(5,1)*{\vee};
(10,0)="a3";(10,-1)*{\land};(10,-4)*{a};
(15,0)="a4";(15,0)*{\cdots};
"a3";"a2"**\crv{(10,5)&(5,5)}; 
\endxy\ $. Now we choose
$\lambda'=
\xy
(0,0)="a1";(0,0)*{\cdots};
(5,0)="a3";(5,1)*{\vee};(5,-1)*{\land};
(10,0)="a3";(10,0)*{\bullet};(10,-4)*{a};
(15,0)="a4";(15,0)*{\cdots};(15,-7)*{\ }; 
\endxy\in {\rm Supp}_2(\lambda)$.
Define $\Lambda$ and $\Lambda'$ as before. Let $\nu\in\Lambda$ and $\nu'\in {\rm Supp}_2(\nu)\cap W_p\cdot\lambda'$.
Then $\nu'\in\Lambda(s_1,s_2)$ as we have seen, and $\nu'$ is obtained from $\nu$ by moving the arrow at the $a$-node to the $(a-1)$-node.
Furthermore, this move can only be done when the arrows at the $(a-1)$-node and $a$-node are not both $\vee$ or both $\land$, i.e. when a cap connecting the two nodes is oriented.
Let us denote the set of $\nu\in\Lambda$ with this property by $\tilde\Lambda$. Then we obtain a map $\nu\mapsto\nu':\tilde\Lambda\to\Lambda(s_1,s_2)$ given by
\parbox[c][.9cm]{3.55cm}{$\begin{smallmatrix}
\cdots&\land&\vee&\cdots\\
\cdots&\vee&\land&\cdots
\end{smallmatrix}
\mapsto
\begin{smallmatrix}
\cdots&\times&{\rm o}&\cdots&\\
\end{smallmatrix}
$}
and it not hard to see that this map preserves $\preceq$ and $W_p$-conjugacy and therefore has its image in $\Lambda'$.\footnote{For the preservation of $\preceq$ one can use functions like the $l_i(\lambda,\mu)$ in \cite[Sect~8]{CdV} and \cite[Sect~5]{BS}.} 

Now let $\nu'\in\Lambda'$ and $\nu\in{\rm Supp}_1(\nu')\cap W_p\cdot\lambda$. Then $\nu\in\Lambda(s_1,s_2)$ by the general fact at the start of the proof, and we see that $\nu=\nu^\pm\in\tilde\Lambda$, where $\nu^+$ resp. $\nu^-$ is obtained from $\nu'$ by moving the $\land$ resp. $\vee$ at the $(a-1)$-node to the $a$-node. So the above map has image equal to $\Lambda'$. Furthermore, it is easy to see that $\eta\preceq\nu$ implies $\eta^-\preceq\nu^-$ and $\eta^+\preceq\nu^+$. By Lemma~\ref{lem.JSF-arrows}(iii) we have that $\Hom_G(\nabla(\nu^+),\nabla(\nu^-))\ne0$.
Since $\lambda=\lambda^+$ we have by Proposition~\ref{prop.trans_projection} that $$(T(\lambda):\Delta(\mu))=(T_{\lambda'}^\lambda T(\lambda'):\Delta(\mu))=(T(\lambda'):T_\lambda^{\lambda'}\Delta(\mu))=(T(\lambda'):\Delta(\mu'))\,,$$ when $\mu=\mu^\pm$ for some $\mu'\in\Lambda'$, i.e. $\mu\in\tilde\Lambda$, and $0$ otherwise.
Here we used that for any finite dimensional $G$-module $M$ with a Weyl filtration $(M:\Delta(\mu))=\dim\Hom_G(M,\nabla(\mu))$.
Finally, $c_{\lambda\mu}$ is oriented if and only if our cap is oriented in $c_{\lambda\mu}$ and $c_{\lambda'\mu'}$ is oriented. So we can now finish by applying the induction hypothesis,
since $c_{\lambda'}$ has one cap less than the original $c_\lambda$.
\end{proof}

\section{Decomposition numbers}\label{s.dec_num}
Let $\mu\in\Lambda_p$. Choose $s_1,s_2\in\{1,\ldots,\min(n,p)\}$ with $s_1+s_2\le n$ and $\mu\in\Lambda(s_1,s_2)$.
First we define the \emph{cap codiagram} $co_\mu$ of the arrow diagram associated to $\mu\in X^+$ as follows.
We assume that the arrow diagram of $\mu$ is cyclically shifted such that at least one of the walls is between the first and last node.
All caps are clockwise, starting from the leftmost node. We start on the left side of the wall. We form the caps recursively. Find an arrow pair $\land\vee$ that are neighbours in the sense that the only arrows in between are already connected with a cap or are part of an $\times$, and connect them with a cap. Repeat this until there are no more such arrow pairs. Now the unconnected arrows that are not part of an $\times$ form a sequence $\vee\cdots\vee\land\cdots\land$. Note that none of these arrows occur inside a cap. The caps on the right side of the wall are formed in the same way.
For example, when $p=17$, $n=20$, $s_1=8$, $s_2=7$ and $\mu=[8^2643^221,107^242^21]$, then
$co_\mu$ is
$$\xy
(0,0)="a1";(0,1)*{\vee};(0,-4)*{13};
(5,0)="a2";(5,-1)*{\land};
(10,0)="a3";
(15,0)="a4";(15,-1)*{\land};(15,-4)*{16};
(20,0)="a5";(20,1)*{\vee};(20,-4)*{0};
(25,0)="a6";(25,-1)*{\land};
(30,0)="a7";(30,-1)*{\land};
(35,0)="a8";(35,1)*{\vee};
(40,0)="a9";(40,1)*{\vee};(40,-1)*{\land};
(45,0)="a10";
(50,0)="a11";(50,1)*{\vee};
(55,0)="a12";(55,-1)*{\land};(57.5,3)*{\rule[0mm]{.3mm}{6mm}};
(60,0)="a13";(60,1)*{\vee};
(65,0)="a14";
(70,0)="a15";(70,-1)*{\land};
(75,0)="a16";(75,-1)*{\land};
(80,0)="a17";(80,1)*{\vee};(80,-4)*{12};(82.5,-3)*{\rule[0mm]{.3mm}{6mm}};
{"a1";"a17"**@{-}}; 
"a5";"a4"**\crv{(20,6)&(15,6)};
"a8";"a7"**\crv{(35,6)&(30,6)};
"a11";"a6"**\crv{(50,11)&(25,9)};
"a17";"a16"**\crv{(80,6)&(75,6)};
\endxy\ .$$

Let $\lambda\in\Lambda_p$ with $\mu\preceq\lambda$. If necessary, we change $s_1,s_2$ (and the arrow diagram of $\mu$, and $co_\mu$) to make sure that $\lambda\in\Lambda(s_1,s_2)$. Then the arrow diagram of $\lambda$ has its single arrows and its $\times$'s at the same nodes as the arrow diagram of $\mu$.
We assume that the nodes are cyclically shifted such that at least one of the walls determined by $s_1$ and $s_2$ is between the first and last node.
Then we know, by Remark~\ref{rems.preceq}.1, that the arrow diagram of $\lambda$ can be obtained from that of $\mu$ by repeatedly replacing an arrow pair $\land\vee$ to the left or to the right of the wall by the opposite arrow pair.
Now we define the \emph{cap codiagram} $co_{\mu\lambda}$ associated to $\mu$ and $\lambda$ by replacing each arrow in $co_\mu$ by the arrow from the arrow diagram of $\lambda$ at the same node. Put differently, we put the caps from $co_\mu$ on top of the arrow diagram of $\lambda$. We say that $co_{\mu\lambda}$ is \emph{oriented} if all caps in $co_{\mu\lambda}$ are oriented (clockwise or anti-clockwise). It is not hard to show that when $co_{\mu\lambda}$ is oriented, the arrow diagrams of $\mu$ and $\lambda$ are the same at the nodes which are not endpoints of a cap in $co_\mu$.

For example, when $p=5$, $n=7$, $s_1=2$, $s_2=3$ and $\mu=[2,1]$. Then $\rho_{s_1}=s_1'=6$, and $co_\mu$ (shifted) is
$$\xy
(0,0)="a1";(0,-1)*{\land};(0,-4)*{1};
(5,0)="a2";(5,1)*{\vee};(5,-4)*{2};
(10,0)="a3";(10,1)*{\vee};(10,-4)*{3};(12.5,3)*{\rule[0mm]{.3mm}{6mm}};
(15,0)="a4";(15,-1)*{\land};(15,-4)*{4};
(20,0)="a5";(20,1)*{\vee};(20,-4)*{0};(22.5,-3)*{\rule[0mm]{.3mm}{6mm}};
{"a1";"a5"**@{-}}; 
"a2";"a1"**\crv{(5,6)&(0,6)}; 
"a5";"a4"**\crv{(20,6)&(15,6)} 
\endxy\ .$$
Consider two dominant weights $\lambda$ with $\mu\preceq\lambda$: $[31,21]$ and $[32,21^2]$
with (shifted) arrow diagrams $\vee\land\vee\vee\land$ and $\vee\vee\land\vee\land$.
Only for the first $co_{\mu\lambda}$ is oriented.

\begin{thm}\label{thm.dec_num}
Let $s_1,s_2\in\{1,\ldots,\min(n,p)\}$ with $s_1+s_2\le n$, $\lambda\in\Lambda(s_1,s_2)$ and $\mu\in X^+$. Then
$$[\nabla(\lambda):L(\mu)]=[\Delta(\lambda):L(\mu)]=
\begin{cases}
1\text{\ if $\mu\preceq\lambda$ and $co_{\mu\lambda}$ is oriented,}\\
0\text{\ otherwise.}
\end{cases}
$$
\end{thm}
\begin{proof}
The proof is by induction on the number of caps in $co_\mu$ and is completely analogous to the proof of Theorem~\ref{thm.filt_mult}.
The role of $\lambda$ is now played by $\mu$. We leave the details to the reader. The final argument involving the projection is as in the proof of \cite[Thm~7.1]{LT}.
\end{proof}

For $s\in\{1,\ldots,\min(n,p)\}$ with $2s\le n$ define the involution $\dagger$ on $\Lambda(s,s)$ by letting $\lambda^\dagger$ be the dominant weight whose arrow diagram is obtained from that of $\lambda$ by replacing all single arrows by their opposite. Note that $\dagger$ reverses the order $\preceq$.
\begin{cornn}
Let $s\in\{1,\ldots,\min(n,p)\}$ with $2s\le n$ and let $\lambda,\mu\in\Lambda(s,s)$. Then $[\Delta(\lambda):L(\mu)]=(T(\mu^\dagger):\nabla(\lambda^\dagger))$.
\end{cornn}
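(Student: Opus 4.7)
The plan is to apply Theorems~\ref{thm.dec_num} and \ref{thm.filt_mult} to both sides of the claimed equality and then identify the resulting combinatorial conditions via the involution $\dagger$. By Theorem~\ref{thm.dec_num} the left-hand side equals $1$ when $\mu\preceq\lambda$ and $co_{\mu\lambda}$ is oriented, and $0$ otherwise. By Theorem~\ref{thm.filt_mult} applied to the pair $(\mu^\dagger,\lambda^\dagger)\in\Lambda(s,s)\times\Lambda(s,s)$ (which is legitimate since $\dagger$ preserves $\Lambda(s,s)$), the right-hand side equals $1$ when $\lambda^\dagger\preceq\mu^\dagger$ and $c_{\mu^\dagger\lambda^\dagger}$ is oriented, and $0$ otherwise. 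Thus the corollary reduces to three claims: first, $\mu\preceq\lambda\iff\lambda^\dagger\preceq\mu^\dagger$; second, the set of unordered cap-endpoint pairs of $co_\mu$ coincides with that of $c_{\mu^\dagger}$; and third, $co_{\mu\lambda}$ is oriented iff $c_{\mu^\dagger\lambda^\dagger}$ is oriented.

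The first is the assertion noted immediately before the corollary. It is a direct consequence of Remark~\ref{rems.preceq}.1: a sequence of local moves $\vee\land\mapsto\land\vee$ on one side of the wall taking $\lambda$ to $\mu$ becomes, after flipping every single arrow, the reverse sequence of such moves from $\mu^\dagger$ to $\lambda^\dagger$, and conversely.

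For the second, note that $\dagger$ exchanges every single $\land$ with a single $\vee$ while leaving $\times$'s and empty nodes unchanged. On each side of the wall the construction of $co_\mu$ is a bracket-matching that recursively pairs nearest-neighbour $\land\vee$ patterns (skipping $\times$'s and already-paired arrows), terminating in the residual sequence $\vee\cdots\vee\land\cdots\land$; this matching is uniquely determined by the diagram. The construction of $c_{\mu^\dagger}$ is the analogous bracket-matching with the roles of $\land$ and $\vee$ swapped, applied to the diagram in which all single arrows have been flipped. These two procedures therefore select the same set of endpoint pairs on each side of the wall; only the drawing convention (clockwise versus anti-clockwise) differs.

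For the third, the single-arrow nodes are the same for $\mu$, $\lambda$, $\mu^\dagger$ and $\lambda^\dagger$, so every cap endpoint inherited from $co_\mu$ (equivalently $c_{\mu^\dagger}$) sits at a single-arrow node of both $\lambda$ and $\lambda^\dagger$. A cap is oriented precisely when its two endpoint arrows are of opposite type ($\land$ and $\vee$), and the flip $\lambda\mapsto\lambda^\dagger$ reverses both endpoint arrows simultaneously, preserving this property. Combining the three claims yields the identity. The main point to be careful about is the uniqueness of the pairings in the second claim; once the bracket-matching interpretation is in place, the three correspondences are transparent.
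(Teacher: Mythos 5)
Your proposal is correct and follows essentially the same route as the paper, whose one-line proof is exactly your observation that $co_{\mu\lambda}$ is obtained from $c_{\mu^\dagger\lambda^\dagger}$ by flipping all single arrows (your claims on matching cap endpoints and orientation), combined with the fact, stated just before the corollary, that $\dagger$ reverses $\preceq$. You have merely spelled out the details that the paper leaves implicit.
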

\begin{proof}
This follows from Theorems~\ref{thm.filt_mult} and \ref{thm.dec_num}, since $co_{\mu\lambda}$ is obtained form $c_{\mu^\dagger\lambda^\dagger}$ by replacing all single arrows by their opposite.
\end{proof}

\begin{rem}
In view of \cite[Lem A4.6]{D2} and the above corollary it is natural to conjecture that, for $\Lambda$ the intersection of $\Lambda(s,s)$ with a $W_p$-orbit under the dot action, the algebra $(O_{\Lambda^\dagger}(k[G])^*,\preceq)$ is the Ringel dual of $(O_\Lambda(k[G])^*,\preceq)$.
\end{rem}

\section{The walled Brauer algebra and the rational Schur functor}\label{s.rat_Schur_functor}
We want to relate our results for the general linear group to the walled Brauer algebra $B_{r,s}(n)$. This is natural since $\GL_n$ and $B_{r,s}(n)$ are each others centraliser on mixed tensor space $V^{\ot r}\ot(V^*)^{\ot s}$, see \cite[Sect~4]{T} for the characteristic $p$ case.
For this we will need to introduce the rational Schur functor $f_{rat}$ from a certain category of $G$-modules to the category of finite dimensional modules for the walled Brauer algebra.
In Section~\ref{ss.rat_schur_walled_brauer_alg} we briefly discuss the rational Schur algebra and the walled Brauer algebra. In Section~\ref{ss.walled_brauer_mods} we introduce Specht, permutation and Young modules for the walled Brauer algebra and certain twisted analogues. In section~\ref{ss.rat_schur_func} we introduce the rational Schur functor and derive its main properties. The main results are Theorem~\ref{thm.rat_schur} and Proposition~\ref{prop.rat_mult}. Combining Proposition~\ref{prop.rat_mult} with Theorem~\ref{thm.filt_mult} we obtain as a corollary the decomposition numbers of the walled Brauer algebra when $p$ is bigger than the greatest hook length in the partitions involved. In Section~\ref{ss.further_results} we prove some results for the inverse rational Schur functor and for Young modules. In the case of the symplectic group and the Brauer algebra all this was done in \cite[Sect~1,2]{DT}. We follow the treatment there closely.

\subsection{The rational Schur algebra and the walled Brauer algebra}\label{ss.rat_schur_walled_brauer_alg}
Let $r,s$ be integers $\ge0$. For any $\delta\in k$ one has the \emph{walled Brauer algebra} $B_{r,s}(\delta)$; see e.g. \cite{CdVDM} or \cite{T} for a definition. Recall that it is defined as a subalgebra of the Brauer algebra $B_{r+s}(\delta)$. In each Brauer diagram one draws a wall that goes between the first $r$ nodes and the last $s$ nodes in the top row and between the first $r$ nodes and the last $s$ nodes in the bottom row. Then $B_{r,s}(\delta)$ is spanned by the {\it walled Brauer diagrams} which are the Brauer diagrams in which each horizontal edge, i.e. an edge joining two vertices in the same row, crosses the wall and each vertical edge, i.e. an edge joining a vertex in the top row to one in the bottom row, is on one side of the wall. This also makes sense for $\delta$ an integer, since we can replace that integer by its natural image in $k$. The walled Brauer algebra is a cellular algebra, see e.g. \cite[Thm~2.7]{CdVDM}. Put $V^{r,s}=V^{\ot r}\ot(V^*)^{\ot s}$. Then we have natural homomorphisms $k\Sym_r\to\End_G(V^{\ot r})$ and $B_{r,s}(n)\to\End_G(V^{r,s})$. The action of the symmetric group $\Sym_r$ is by permutation of the factors, the action of $B_{r,s}(n)$ is explained in \cite[p 564,565]{BCHLLS} and \cite[p1220]{T}. Using classical invariant theory one can then show that these homomorphisms are surjective and that they are injective in case $n\ge r$ and $n\ge r+s$, respectively; see \cite{DeCProc} and \cite[Thm~4.1]{T}. Let $S(n,r)$ and $S(n;r,s)$ be the spans of the representing automorphisms of $G$ in $\End(V^{\ot r})$ and $\End(V^{r,s})$ respectively. Then these are algebras and the natural embeddings $S(n,r)\to\End_{k\Sym_r}(V^{\ot r})$ and $S(n;r,s)\to\End_{B_{r,s}(n)}(V^{r,s})$ are isomorphisms; see \cite[(2.6c)]{Green} and \cite[Thm~4.1]{T}. The algebra $S(n,r)$ is the {\it Schur algebra}, see \cite{Green}, and $S(n;r,s)$ is the {\it rational Schur algebra} introduced in \cite{DD}, see also \cite{D3}. Both algebras are generalised Schur algebras, see \cite[Ch~A]{Jan}. For $S(n,r)$ the corresponding set of dominant weights is the set of partitions of $r$ of length $\le n$ and for $S(n;r,s)$ it is
$$\Lambda_{r,s}:=\Big\{\lambda=[\lambda^1,\lambda^2]\in X^+\,\Big|\,|\lambda^1|=r-t\text{\ and\ }|\lambda^2|=s-t\text{\ for some $t\in\mb N$}\Big\}\,.$$

The following lemma is well-known; it will be used in Section~\ref{ss.rat_schur_func}.
\begin{lem}\label{lem.directsummand}\
\begin{enumerate}[{\rm(i)}]
\item Let $M$ be a finite dimensional vector space over $k$. The $k\GL(M)$-module $M$ is a direct summand of $M\ot M^*\ot M$ and if $\dim M\ne0$ in $k$, then the trivial $k\GL(M)$-module $k$ is a direct summand of $M\ot M^*$.
\item Let $H$ be a group and let $M$ be a finite dimensional $kH$-module. Let $r,s,t$ be integers with $r,s\ge t\ge 1$. Then $M^{\ot r-t}\ot (M^*)^{\ot s-t}$ is a direct summand of $M^{\ot r}\ot (M^*)^{\ot s}$ if $r-t$ and $s-t$ are not both $0$ or if $\dim M\ne0$ in $k$.
\end{enumerate}
\end{lem}
\begin{proof}
(i) is \cite[Lem 1.1(i)]{DT} and (ii) follows from (i) by induction.
\end{proof}

\subsection{Modules for the walled Brauer algebra}\label{ss.walled_brauer_mods}
\begin{notnn}
Put $\Sym_{r,s}=\Sym_r\times\Sym_s$. In what follows, $r,s,t$ are integers with $r,s\ge t\ge0$ and we put $r'=r-t$ and $s'=s-t$.
\end{notnn}
Let $\delta\in k$. For any integer $i\ge0$, let $I_{t,i}$ be the left ideal of the walled Brauer algebra $B_{r,s}=B_{r,s}(\delta)$ spanned by the diagrams of which the bottom row has at least $t+i$ horizontal edges, $t$ of which join, for $1\le j\le t$, the $j$-th node from the right before the wall to the $j$-th node from the right after the wall. Put $I_t:=I_{s,0}$, $Z_{t,i}:=I_{t,i}/I_{t,i+1}$ and $Z_t=Z_{t,0}$. Note that $I_{t,i}=Z_{t,i}=0$ if $t+i>\min(r,s)$. The group $\Sym_{r',s'}$ acts on $I_t$ from the right by permuting the first $r'$ nodes before the wall and the first $s'$ nodes after the wall of the bottom row of a diagram. Thus $I_t$ and $Z_t$ are $(B_{r,s}(\delta), k\Sym_{r',s'})$-bimodules. Furthermore $Z_t$ is a free right $k\Sym_{r',s'}$-module which has as a basis the canonical images of the diagrams in which the vertical edges do not cross and of which the bottom row has precisely $t$ horizontal edges which join, for $1\le j\le t$, the $j$-th node from the right before the wall to the $j$-th node from the right after the wall. One easily checks that there are $$\binom{r}{r'}\binom{s}{s'}t!=\binom{r}{t}\binom{s}{t}t!$$ such diagrams.

For $\mu$ a partition of $r$ let $S(\mu)$, $M(\mu)$ and $Y(\mu)$ be the Specht module, permutation module and Young module of $k\Sym_r$ associated to $\mu$. If ${\rm char}\,k=0$, then $S(\mu)$ is irreducible and we also denote it by $D(\mu)$.
If ${\rm char}\,k=p>0$ and $\mu$ is $p$-regular, then $S(\mu)$ has a simple head and we denote it by $D(\mu)$. Denote the sign representation of $k\Sym_r$ by $k_{\rm sg}$.

If $A$ and $B$ are $k$-algebras, $M$ is an $A$-module and $N$ is a $B$-module, then $M\boxtimes N$ denotes $M\ot N$ endowed with its natural structure of $A\ot B$-module. Let $\lambda^1$ and $\lambda^2$ be partitions of $r$ and $s$ respectively. When it is clear that we are dealing with a $k\Sym_{r,s}$-module, we denote $k_{\rm sg}\boxtimes k_{\rm sg}$ simply by $k_{\rm sg}$.
Following \cite[Sect~3]{CdVDM}, we define the {\it Specht (or cell) module} $\mc S(\lambda^1,\lambda^2)$ and {\it twisted Specht (or cell) module} $\widetilde{\mc S}(\lambda^1,\lambda^2)$ for the walled Brauer algebra by
\begin{align*}
\mc S(\lambda^1,\lambda^2):=&\,Z_t\ot_{k\Sym_{r',s'}}(S(\lambda^1)\boxtimes S(\lambda^2))\text{\quad and}\\
\widetilde{\mc S}(\lambda^1,\lambda^2):=&\,Z_t\ot_{k\Sym_{r',s'}}\big(k_{\rm sg}\ot (S(\lambda^1)\boxtimes S(\lambda^2))\big).
\end{align*}
By the above, $\dim\mc S(\lambda^1,\lambda^2)=\dim\widetilde{\mc S}(\lambda^1,\lambda^2)=\binom{r}{t}\binom{s}{t}t!\dim S(\lambda^1)\dim S(\lambda^2)$.
By \cite[Rem.~6.4]{Green} we have $k_{\rm sg}\ot S(\mu)\cong S(\mu^T)^*$, where $\mu^T$ denotes the transpose of $\mu$. If ${\rm char}\,k=0$ or $\lambda^1,\lambda^2$ are $p$-cores, then $S(\lambda^h)^*\cong S(\lambda^h)$ for all $h\in\{1,2\}$ and $\widetilde{\mc S}(\lambda^1,\lambda^2)\cong{\mc S}(\lambda^1{}^T,\lambda^2{}^T)$.
If $\lambda^1,\lambda^2$ are $p$-regular and $\ne\emptyset$ in case $r=s\ge1$ and $\delta=0$, then ${\mc S}(\lambda^1,\lambda^2)$ and $\widetilde{\mc S}(\lambda^1,\lambda^2)$ have a simple head which we denote by ${\mc D}(\lambda^1,\lambda^2)$ and $\widetilde{\mc D}(\lambda^1,\lambda^2)$, see \cite[Thm~2.7]{CdVDM}. Whenever we write ${\mc D}(\lambda^1,\lambda^2)$ or $\widetilde{\mc D}(\lambda^1,\lambda^2)$ for some $p$-regular $\lambda^1,\lambda^2$, we assume that $\lambda^1,\lambda^2\ne\emptyset$ in case $r=s\ge1$ and $\delta=0$.

As Hartmann and Paget \cite[Sect~6]{HarPag} did for the Brauer algebra, we define the {\it permutation module} $\mc M(\lambda^1,\lambda^2)$ and the {\it twisted permutation module} $\widetilde{\mc M}(\lambda^1,\lambda^2)$ for the walled Brauer algebra by
\begin{align*}
\mc M(\lambda^1,\lambda^2):=&\,{\rm Ind}^{B_{r,s}}_{k\Sym_{r',s'}}(M(\lambda^1)\boxtimes M(\lambda^2))\text{\quad and}\\
\widetilde{\mc M}(\lambda^1,\lambda^2):=&\,{\rm Ind}^{B_{r,s}}_{k\Sym_{r',s'}}\big(k_{\rm sg}\ot (M(\lambda^1)\boxtimes M(\lambda^2))\big).
\end{align*}
Here ${\rm Ind}^{B_{r,s}}_{k\Sym_{r',s'}}$ is defined by ${\rm Ind}^{B_{r,s}}_{k\Sym_{r',s'}}M=I_t\ot_{k\Sym_{r',s'}}M$ for any $k\Sym_{r',s'}$-module $M$.
Note that ${\mc M}(1^r,1^s)\cong B_{r,s}$ and $\widetilde{\mc M}(1^r,1^s)\cong B_{r,s}$, since $M(1^r,1^s)=k\Sym_{r,s}$, $k_{\rm sg}\ot k\Sym_{r,s}\cong k\Sym_{r,s}$ as $k\Sym_{r,s}$-modules and $I_0=B_{r,s}$. 

Finally, we define the {\it Young module} $\mc Y(\lambda^1,\lambda^2)$ and the {\it twisted Young} module $\widetilde{\mc Y}(\lambda^1,\lambda^2)$ for the walled Brauer algebra as the unique indecomposable summand of $\mc M(\lambda^1,\lambda^2)$ resp. $\widetilde{\mc M}(\lambda^1,\lambda^2)$ which surjects onto $Z_t\ot_{k\Sym_{r',s'}}(Y(\lambda^1)\boxtimes Y(\lambda^2))$ resp. $Z_t\ot_{k\Sym_{r',s'}}\big(k_{\rm sg}\ot(Y(\lambda^1)\boxtimes Y(\lambda^2))\big)$; compare \cite[Def.~15]{HarPag}. 

Let $i$ be an integer $\ge0$. We denote the diagonal copy of $\Sym_i$ in $\Sym_{i,i}$ by $D_i$. We consider $\Sym_{i,i}$ and $D_i$ as embedded in $\Sym_{r',s'}$ via the embedding $\Sym_{r'-i,s'-i}\times\Sym_{i,i}\subseteq\Sym_{r',s'}$. From the proof of \cite[Prop.~23]{HarPag} in the Brauer algebra case we have

\begin{prop}[{cf. \cite[Prop~1.1]{DT}}]\label{prop.filtration}
Let $M$ be a $k\Sym_{r',s'}$-module.
\begin{enumerate}[{\rm(i)}]
\item $P:={\rm Ind}^{B_{r,s}}_{k\Sym_{r',s'}}M$ has a descending filtration $P=P_0\supseteq P_1\supseteq\cdots$ such that $P_i=0$ for $i>t$ and $P_i/P_{i+1}\cong Z_{t,i}\ot_{k\Sym_{r',s'}}M$ for $i\ge0$.
\item $Z_{t,i}\ot_{k\Sym_{r',s'}}M\cong Z_{t+i}\ot_{k\Sym_{r'-i,s'-i}}M_{D_i}$ for $i\le t$, where $M_{D_i}$ is the largest trivial $D_i$-module quotient of $M$.
\end{enumerate}
\end{prop}
The filtration of ${\rm Ind}^{B_{r,s}}_{k\Sym_{r',s'}}M=I_t\ot_{k\Sym_{r',s'}}M$ is constructed as follows. Let $I_t(i)$ be the subspace of $I_t$ spanned by the diagrams of which the bottom row has exactly $t+i$ horizontal edges, $t$ of which join, for $1\le j\le t$, the $j$-th node from the right before the wall to the $j$-th node from the right after the wall. Then $I_{t,i}=\bigoplus_{j\ge i}I_t(j)$. Since each $I_t(i)$ is stable under the right action of $\Sym_{r',s'}$ on $I_t$, we have ${\rm Ind}^{B_{r,s}}_{k\Sym_{r',s'}}M=\bigoplus_{i\ge0}(I_t(i)\ot_{k\Sym_{r',s'}}M)$. Now we put $P_i=\bigoplus_{j\ge i}(I_t(j)\ot_{k\Sym_{r',s'}}M)\cong I_{t,i}\ot_{k\Sym_{r',s'}}M$ and observe that $P_i$ is a $B_{r,s}$-submodule of $P$.

The following result shows that we can restrict to the case that $\delta$ lies in the prime field. It is the analogue of \cite[Prop~1.2]{DT}.
It can be proved in the same way where the role of \cite[Prop.~6.1]{CdVM2} is now played by \cite[Cor~4.3]{CdVDM}.
\begin{prop}[{cf.~\cite[Cor~4.3]{CdVDM}}]\label{prop.notinFp}
Assume that $\delta$ does not lie in the prime field. Put $n_i=\binom{r}{i}\binom{s}{i}i!$. Then we have an algebra isomorphism
$$B_{r,s}(\delta)\cong\bigoplus_{i=0}^{\min(r,s)}\Mat_{n_i}(k\Sym_{{r-i},s-i}).$$
\end{prop}
In the remainder of this subsection we assume that $\delta=n$ and that $n\ge r+s$.
The {\it contravariant dual} $M^\circ$ of a finite dimensional $G$-module $M$ is defined as the dual vector space of $M$ with action $(g\cdot f)(x)=f(g^Tx)$.
As is well-known, $L(\lambda)^\circ\cong L(\lambda)$, $\nabla(\lambda)^\circ\cong\Delta(\lambda)$ and $\Delta(\lambda)^\circ\cong\nabla(\lambda)$ for all $\lambda\in X^+$.
So $V^\circ\cong V$, $(V^*)^\circ\cong V^*$ and therefore $(V^{r,s})^\circ\cong V^{r,s}$. Put differently, the standard inner products on $V$ and $V^*$ induce a nondegenerate bilinear form $(-,-)$ on $V^{r,s}$ which is contravariant: $(gu,v)=(u,g^Tv)$ for all $u,v\in V^{r,s}$ and all $g\in G$.
This implies that $S(n;r,s)$ is stable under the transpose map of $\End_k(V^{r,s})$ given by this form. We can use this transpose map to define the dual of any $S(n;r,s)$ module $M$ which of course identifies with $M^\circ$. Recall that $B_{r,s}$ has a standard anti-automorphism $\iota$ that flips a diagram over the horizontal axis. One easily checks that $(bu,v)=(u,\iota(b)v)$ for all $u,v\in V^{r,s}$ and all $b\in B_{r,s}$. 
This means that the $B_{r,s}$-module $V^{r,s}$ is self-dual.

Using the description of the invariants of vectors and covectors for $\GL_n$ we see that $\Hom_G(V^{r_2,s_2},V^{r_1,s_1})$ has a basis indexed by $((r_1,s_1),(r_2,s_2))$-diagrams. These are diagrams which are graphs whose vertices are arranged in two rows, $r_1+s_1$ in the top row and $r_2+s_2$ in the bottom row with a wall which goes between the first $r_1$ nodes and the last $s_1$ nodes at the top, and between the first $r_2$ nodes and the last $s_2$ nodes at the at the bottom. 
The edges form a matching of the vertices in pairs such that the horizontal edges cross the wall and the vertical edges don't.
See e.g. the proof of \cite[Thm~4.1]{T} and the preceding paragraph.
The horizontal edges in the bottom row correspond to contractions by means of the canonical bilinear form and the horizontal edges in the top row correspond to ``multiplications" by the invariant $\sum_{i=1}^ne_i\ot e_i^*$, where the $e_i$ and $e_i^*$ are the elements of the standard basis of $V$ and its dual basis. In the proofs of Lemmas~\ref{lem.surjective} and \ref{lem.tensor} below we will use these diagram bases.

The diagrams that form a basis of $I_t$ are in 1-1 correspondence with the $((r,s),(r',s'))$-diagrams: just omit in the bottom row the last $t$ nodes before the wall and the last $t$ nodes after the wall, and the edges which have these nodes as endpoints. So the canonical isomorphism $B_{r,s}\stackrel{\sim}{\to}\End_G(V^{r,s})$ induces a canonical isomorphism
$$I_t\stackrel{\sim}{\to}\Hom_G(V^{r',s'},V^{r,s})$$ of $(B_{r,s},k\Sym_{r',s'})$-bimodules. 
The vector space $\Hom_G(V^{r,s},V^{r',s'})$ has a natural $(k\Sym_{r',s'},B_{r,s})$-bimodule structure and therefore, by means of the standard anti-automorphisms of $\Sym_{r',s'}$ and $B_{r,s}$, also a natural $(B_{r,s},k\Sym_{r',s'})$-bimodule structure. Composing the above isomorphism with the transpose map $\Hom_G(V^{r',s'},V^{r,s})\to\Hom_G(V^{r,s},V^{r',s'})$, using contravariant duals, we obtain a canonical isomorphism
\begin{equation}\label{eq.I_tiso}
\varphi:I_t\stackrel{\sim}{\to}\Hom_G(V^{r,s},V^{r',s'})
\end{equation}
of $(B_{r,s},k\Sym_{r',s'})$-bimodules, which induces an isomorphism
\begin{equation}\label{eq.Z_tiso}
Z_t\stackrel{\sim}{\to}\Hom_G(V^{r,s},V^{r',s'})/\varphi(I_{t,1})
\end{equation}
of $(B_{r,s},k\Sym_{r',s'})$-bimodules.

\subsection{The rational Schur functor}\label{ss.rat_schur_func}
For a finite dimensional algebra $A$ over $k$, we denote the category of finite dimensional $A$-modules by ${\rm mod}(A)$.
Assume that $n\ge r,s\ge0$. The Schur functor $f:{\rm mod}(S(n,r))\to{\rm mod}(k\Sym_r)$ can be defined by
\begin{align*}
&f(M)=\Hom_{S(n,r)}(V^{\ot r},M)=\Hom_G(V^{\ot r},M).
\end{align*}
Here the action of the symmetric group comes from the action on $V^{\ot r}$ and we use the inversion to turn right modules into left modules. An equivalent definition is: $f(M)=M_{\varpi_r}$, the weight space corresponding to the weight $\varpi_r=1^r=(1,1,\ldots,1)\in\mb Z^r\subseteq\mb Z^n$; see \cite{Green}. An isomorphism
\begin{align}\label{eq.schuriso}
\Hom_G(V^{\ot r},M)\stackrel{\sim}{\to} M_{\varpi_r}
\end{align}
is given by $u\mapsto u(e_1\ot e_2\ot\cdots\ot e_r)$. This can be deduced from \cite[6.2g Rem.~1 and 6.4f]{Green}. We have embeddings $\Sym_r\subseteq\Sym_n\subseteq N_G(T)$, where the second embedding is by permutation matrices. Then $\varpi_r$ is fixed by $\Sym_r$, so there is an action of $\Sym_r$ on $M_{\varpi_r}$ for every $S(n,r)$-module $M$. With this action \eqref{eq.schuriso} is $\Sym_r$-equivariant. The inverse Schur functor $g:{\rm mod}(k\Sym_r)\to{\rm mod}(S(n,r))$ can be defined by $g(M)=V^{\ot r}\ot_{k\Sym_r}M$.

Recall that $\breve{\xi}$ denotes the reversed tuple of $\xi\in\mb Z^n$. We can also define $f(M)=M_{\breve{\varpi}_r}$ and then we have an isomorphism
\begin{align*}
\Hom_G(V^{\ot r},M)\stackrel{\sim}{\to} M_{\breve{\varpi}_r}
\end{align*}
given by $u\mapsto u(e_{n-r+1}\ot e_2\ot\cdots\ot e_n)$. In this case $\Sym_r$ is embedded in $\Sym_n$ as $\Sym(\{n-r+1,\ldots,n\})$.

Combining the above two versions of the Schur functor we can form another Schur functor $f^{(2)}:{\rm mod}(S(n,r)\ot S(n,s))\to{\rm mod}(k\Sym_{r,s})$ by
$f^{(2)}(M)=\Hom_{G\times G}(V^{\ot r}\boxtimes V^{\ot s},M)$ and then we have an isomorphism 
\begin{align}\label{eq.schur2iso}
\Hom_{G\times G}(V^{\ot r}\boxtimes V^{\ot s},M)\stackrel{\sim}{\to} M_{(\varpi_r,\breve{\varpi}_s)}
\end{align}
given by $u\mapsto u((e_1\ot\cdots\ot e_r)\ot(e_{n-s+1}\ot\cdots\ot e_n))$.
This isomorphism is $\Sym_{r,s}$ equivariant if we embed $\Sym_{r,s}$ in $\Sym_{n,n}$ by combining the above two types of embeddings.
It is elementary to verify that for $M$ an $S(n,r)$-module and $N$ an $S(n,s)$-module we have $f^{(2)}(M\boxtimes N)=f(M)\boxtimes f(N)$.

We now retain the notation and assumptions of Section~\ref{ss.walled_brauer_mods}. In particular, $n\ge r+s$ and $B_{r,s}=B_{r,s}(n)$. We define the {\it rational Schur functor}
$$f_{rat}:{\rm mod}(S(n;r,s))\to{\rm mod}(B_{r,s})$$ by
\begin{align*}
&f_{rat}(M)=\Hom_{S(n;r,s)}(V^{r,s},M)=\Hom_G(V^{r,s},M).
\end{align*}
Here the action of the $B_{r,s}$ comes from the action on $V^{r,s}$ and we use the standard anti-automorphism of $B_{r,s}$ to turn right modules into left modules. Since $V=\nabla(\ve_1)=\Delta(\ve_1)$ and $V^*=\nabla(-\ve_n)=\Delta(-\ve_n)$ are tilting modules, the same holds for $V^{r,s}$; see e.g. \cite[Prop~E.7]{Jan}. This implies that $f_{rat}$ maps short exact sequences of modules with a good filtration to exact sequences.

We define the {\it inverse rational Schur functor}
$$g_{rat}:{\rm mod}(B_{r,s})\to{\rm mod}(S(n;r,s))$$ by
\begin{align*}
&g_{rat}(M)=V^{r,s}\ot_{B_{r,s}}M.
\end{align*}

By \cite[Thm~2.11]{Rot} we have for $N\in{\rm mod}(B_{r,s})$ and $M\in{\rm mod}(S(n;r,s))$ 
\begin{equation}\label{eq.adjointiso}
\Hom_G(g_{rat}(N),M)\cong\Hom_{B_{r,s}}(N,f_{rat}(M)).
\end{equation}
There is an alternative for $f_{rat}$ and $g_{rat}$:
$$\tilde f_{rat}(M)=V^{r,s}\ot_{S(n;r,s)}M\text{\quad and\quad }\tilde g_{rat}(N)=\Hom_{B_{r,s}}(V^{r,s},N),$$
where we consider $V^{r,s}$ as right $S(n;r,s)$-module via the transpose map of $S(n;r,s)$.
But, by \cite[Lemma~3.60]{Rot}, we have $\tilde{f}_{rat}(M^\circ)\cong f_{rat}(M)^*$ and $\tilde{g}_{rat}(N^*)\cong g_{rat}(N)^\circ$. 
So the results obtained using $\tilde{f}_{rat}$ and $\tilde{g}_{rat}$ can also be obtained by dualizing the results obtained using $f_{rat}$ and $g_{rat}$.

The following lemma is the analogue of \cite[Lem~2.1]{DT} for our situation.
\begin{lem}\label{lem.dimension}
For $\lambda=[\lambda^1,\lambda^2]\in\Lambda_{r,s}$ we have $\dim\Hom_G(\Delta(\lambda),V^{r,s})=$
$$\dim\Hom_G(V^{r,s},\nabla(\lambda))=\binom{r}{t}\binom{s}{t}t!\dim S(\lambda^1)\dim S(\lambda^2).$$
\end{lem}
\begin{proof}
Since $V^{r,s}$ has a good filtration, the dimension of $\Hom_G(\Delta(\lambda),V^{r,s})$ is equal to the multiplicity of $\nabla(\lambda)$ in a good filtration of $V^{r,s}$.
This multiplicity is equal to the coefficient of $\chi(\lambda)$ in an expression of ${\rm ch}\,V^{r,s}$ as a ${\mb Z}$-linear combination of Weyl characters. Similar remarks apply to the dimension of $\Hom_G(V^{r,s},\nabla(\lambda))$. For a partition $\mu$ denote $\dim S(\mu)$ by $d_\mu$.
For $r,s\ge0$ with $r+s\le n$ put $$\psi_{rs}=\sum_{\lambda^1,\lambda^2}d_{\lambda^1}d_{\lambda^2}\chi([\lambda^1,\lambda^2])\,,$$ where the sum is over all partitions $\lambda^1$ of $r$ and $\lambda^2$ of $s$. Then we have to show that for $r,s\ge0$ with $r+s\le n$ we have
$${\rm ch}\,V^{r,s}=\sum_{t=0}^{\min(r,s)}\binom{r}{t}\binom{s}{t}t!\psi_{r-t,s-t}\,.\eqno{(*)}$$
Since ${\rm ch}\,V^{r,0}=\psi_{r,0}$ and ${\rm ch}\,V^{0,s}=\psi_{0,s}$, by classical Schur-Weyl duality, (*) holds when $s=0$ or $r=0$. 
From the rules for induction and restriction for the pair $\Sym_{r-1}\le\Sym_r$ we obtain that, for $\mu$ a partition of $r-1$, $rd_\mu=\sum_\nu d_\nu$, where the sum is over the partitions $\nu$ of $r$ obtained by adding a box to $\mu$ and, for $\mu$ a partition of $r$, $d_\mu=\sum_\nu d_\nu$ , where the sum is over the partitions $\nu$ of $r-1$ obtained by removing a box from $\mu$.
From this and Brauer's formula \cite[II.5.8]{Jan} we obtain for $r\ge1$, $s\ge0$ with $r+s<n$ that
$${\rm ch}\,(V^*)\psi_{r,s}=\chi(-\ve_n)\psi_{r,s}=\psi_{r,s+1}+r\psi_{r-1,s}\,.$$
From this (*) follows easily by induction on $s$.
\end{proof}

Recall that induced modules for a reductive group can be realized in the algebra of regular functions of the group. We embed $G$ into $G\times G$ via
$$A\mapsto(A, (A^{-1})^T).$$
Let $\lambda=[\lambda^1,\lambda^2]\in X^+$ with $|\lambda^1|=r$, $|\lambda^2|=s$. From the fact that $\nabla(\lambda)$ has a bideterminant basis labelled by standard rational bitableaux, see \cite[Thm.~2.2(iii)]{T}, it is clear that restriction of functions induces an epimorphism $\nabla(\lambda^1)\boxtimes\nabla(\lambda^2)\to\nabla(\lambda)$ of $G$-modules.\footnote{We have $\nabla(\lambda^1)\boxtimes\nabla(\lambda^2)=\nabla(\lambda^1)\ot\nabla(-\breve{\lambda^2})$ as $G$-modules, since twisting with the inverse transpose turns $\nabla(\lambda)$ into $\nabla(-\breve{\lambda})$ and $\Delta(\lambda)$ into $\Delta(-\breve{\lambda})$.}
Now we can form a commutative diagram as below where the vertical maps are induced by the restriction of functions $\nabla(\lambda^1)\boxtimes\nabla(\lambda^2)\to\nabla(\lambda)$ and the horizontal maps are evaluation at $(e_1\ot\cdots\ot e_r)\ot(e_{n-s+1}\ot\cdots\ot e_n)$ and $(e_1\ot\cdots\ot e_r)\ot(e^*_{n-s+1}\ot\cdots\ot e^*_n)$.

\begin{equation}\label{eq.rat_schur_iso}
\vcenter{
\xymatrix @R=30pt @C=15pt @M=6pt{
\Hom_{G\times G}(V^{\ot r}\boxtimes V^{\ot s},\nabla(\lambda^1)\boxtimes\nabla(\lambda^2))\ar[r]\ar[d]&\nabla(\lambda^1)_{\varpi_r}\ot\nabla(\lambda^2)_{\breve{\varpi}_s}\ar[d]\\
\Hom_G(V^{r,s},\nabla(\lambda))\ar[r]&\nabla(\lambda)_{[\varpi_r,\varpi_s]}
}}
\end{equation}
Here $\nabla(\lambda)_\mu$ denotes the $\mu$-weight space of $\nabla(\lambda)$ with respect to $T$.
\begin{lem}\label{lem.diagram}\
\begin{enumerate}[{\rm (i)}]
\item Let $M$ be a homogeneous polynomial $T\times T$-module of bidegree $(r,s)$ and let $\mu^1,\mu^2\in\mb N^n$ with $|\mu^1|=r$ and $|\mu^2|=s$, such that for some $u$ we have $\mu^1_i=0$ for all $i>u$ and $\mu^2_i=0$ for all $i\le u$. Then the $(\mu^1,\mu^2)$-weight space of $M$ with respect to $T\times T$ is the same as the $(\mu^1-\mu^2)$-weight space with respect to $T$, embedded in $T\times T$ via $t\mapsto (t,t^{-1})$.
\item Let $\lambda=[\lambda^1,\lambda^2]\in X^+$ with $|\lambda^1|=r$, $|\lambda^2|=s$ and let $\mu^1,\mu^2\in\mb N^n$ with $|\mu^1|=r$ and $|\mu^2|=s$, such that for some $u$ we have $\mu^1_i=0$ for all $i>u$ and $\mu^2_i=0$ for all $i\le u$. Then the restriction of functions induces an isomorphism $(\nabla(\lambda^1)\boxtimes\nabla(\lambda^2))_{\mu^1-\mu^2}\to\nabla(\lambda)_{\mu^1-\mu^2}$ on the $(\mu^1-\mu^2)$-weight spaces for $T$.
\item All maps in \eqref{eq.rat_schur_iso} are isomorphisms.
\end{enumerate}
\end{lem}
\begin{proof}
(i).\ A weight $(\mu^1,\mu^2)$ of $T\times T$ vanishes on $T$ if and only if $\mu^1=\mu^2$. So if $\mu^h$ and $\nu^h$, $h\in\{1,2\}$, are weights of $T$ such that the $\nu^h$ are polynomial, for some $u$ we have $\mu^1_i=0$ for all $i>u$ and $\mu^2_i=0$ for all $i\le u$, $|\nu^h|=|\mu^h|$ for all $h\in\{1,2\}$, and $(\nu^1,\nu^2)|_T=(\mu^1,\mu^2)|_T$, then $(\nu^1,\nu^2)=(\mu^1,\mu^2)$.\\
(ii).\ Clearly $\nabla(\lambda^1)\boxtimes\nabla(\lambda^2)$ induces a surjection on the weight spaces for $T$. So it suffices to show that $(\nabla(\lambda^1)\boxtimes\nabla(\lambda^2))_{\mu^1-\mu^2}$ and $\nabla(\lambda)_{\mu^1-\mu^2}$ have the same dimension. Note that, by (i), $(\nabla(\lambda^1)\boxtimes\nabla(\lambda^2))_{\mu^1-\mu^2}$ is also the $(\mu^1,\mu^2)$-weight space with respect to $T\times T$. By \cite[4.5a]{Green} $\dim\nabla(\lambda^1)_{\mu^1}\boxtimes\nabla(\lambda^2)_{\mu^2}$ is the number of standard $\lambda^1$-tableaux of content $\mu^1$ times the number of standard $\lambda^2$-tableaux of content $\mu^2$.
By \cite[Thm~3.2(iii)]{T} and the definitions on p1215/1216 in \cite{T} $\dim\nabla(\lambda)_{\mu^1-\mu^2}$ is the number of standard rational tableaux of shape $(\lambda^1,\lambda^2)$ and weight $\mu^1-\mu^2$. 
By the proof of (i) any rational tableau $(T^1,T^2)$ of shape $(\lambda^1,\lambda^2)$ and weight $\nu^1-\nu^2=\mu^1-\mu^2$, $\nu^h$ the weight of $T^h$, $h\in\{1,2\}$, must satisfy $(\nu^1,\nu^2)=(\mu^1,\mu^2)$. Because of our condition on $\mu^1$ and $\mu^2$, $T^1$ and $T^2$ have no numbers in common. So $(T^1,T^2)$ is rational standard if and only if $T^1$ and $T^2$ are standard. So the two dimensions are the same.\\
(iii).\ That the horizontal map in the top row of \eqref{eq.rat_schur_iso} is an isomorphism was pointed out before; see \eqref{eq.schur2iso}. The vertical map on the right is an isomorphism by (ii). It follows that the horizontal map in the bottom row is surjective. But then it must be an isomorphism by Lemma~\ref{lem.dimension}. Now the vertical map on the left must also be an isomorphism, since it is a composite of isomorphisms.
\end{proof}

For $\mu\in\mb N^l$ we put $S^\mu V=S^{\mu_1}V\ot\cdots\ot S^{\mu_l}V$ and $\bigwedge{\Nts}^\mu V=\bigwedge{\Nts}^{\mu_1}V\ot\cdots\ot \bigwedge{\Nts}^{\mu_l}V$.

\begin{lem}\label{lem.surjective}
Recall that $r'=r-t$ and $s'=s-t$. The following holds.
\begin{enumerate}[{\rm(i)}]
\item Let $\lambda=[\lambda^1,\lambda^2]\in X^+$ with $|\lambda^1|=r'$, $|\lambda^2|=s'$. Then the canonical homomorphism
$$\Hom_G(V^{r,s},V^{r',s'})\ot_{k\Sym_{r',s'}}\Hom_G(V^{r',s'},\nabla(\lambda))\to\Hom_G(V^{r,s},\nabla(\lambda)),$$
given by composition, is surjective.
\item Let $M$ be an $S(n,r')\ot S(n,s')$-module. The canonical homomorphism
$$\Hom_G(V^{r,s},V^{r',s'})\ot_{k\Sym_{r',s'}}\Hom_{G\times G}(V^{\ot r'}\boxtimes V^{\ot s'},M)\to\Hom_G(V^{r,s},M),$$
given by composition, is an isomorphism if $M$ is a direct sum of direct summands of $V^{\ot r'}\boxtimes V^{\ot s'}$ and it is surjective if $M$ is injective.
\end{enumerate}
\end{lem}
\begin{proof}
(i). By Lemma~\ref{lem.dimension} it suffices to give a family of $\binom{r}{t}\binom{s}{t}t!\dim S(\lambda^1)\dim S(\lambda^2)$ elements of $\Hom_G(V^{r,s},V^{r',s'})\ot_{k\Sym_{r',s'}}\Hom_G(V^{r',s'},\nabla(\lambda))$ which is mapped to an independent family in $\Hom_G(V^{r,s},\nabla(\lambda))$. As we saw, $\Hom_G(V^{r,s},V^{r',s'})$ has a basis indexed by $((r',s'),(r,s))$-diagrams. Let $D$ be the set of $((r',s'),(r,s))$-diagrams that have no horizontal edges in the top row and whose vertical edges do not cross, and let $(p_d)_{d\in D}$ be the corresponding family of basis elements in $\Hom_G(V^{r,s},V^{r',s'})$. Let $(u_i)_{\in I}$ be a basis of $\Hom_G(V^{r',s'},\nabla(\lambda))$. We have $\Hom_G(V^{r',s'},\nabla(\lambda))\cong S(\lambda^1)\boxtimes S(\lambda^2)$ by Lemma~\ref{lem.diagram}(iii) (with $(r,s)=(r',s')$), $|D|=\binom{r}{t}\binom{s}{t}t!$ and $p_d\ot u_i$ is mapped to $u_i\circ p_d$. So it suffices to show that the elements $u_i\circ p_d$, $d\in D$, $i\in I$, are linearly independent. So assume $\sum_{i,d}a_{id}\,u_i\circ p_d=0$ for certain $a_{id}\in k$. Consider the following diagram $d_0\in D$:
$$
\xy
(-7,2.5)*{d_0=};
(0,0)="a1"*{\bullet};
(6,0)="a2"*{\cdots};
(12,0)="a3"*{\bullet};
(18,0)="a4"*{\bullet};
(24,0)="a5"*{\cdots};
(30,0)="a6"*{\bullet};
(36,0)="a7"*{\bullet};
(42,0)="a8"*{\cdots};
(48,0)="a9"*{\bullet};
(54,0)="a10"*{\bullet};
(60,0)="a11"*{\cdots};
(66,0)="a12"*{\bullet};
(0,5)="b1"*{\bullet};
(6,5)="b2"*{\cdots};
(12,5)="b3"*{\bullet};
(36,5)="b7"*{\bullet};
(42,5)="b8"*{\cdots};
(48,5)="b9"*{\bullet};
{(33,-6);(33,8)**@{--}};
{"a1";"b1"**@{-}};
{"a3";"b3"**@{-}};
{"a7";"b7"**@{-}};
{"a9";"b9"**@{-}};
"a4";"a10"**\crv{(36,-5)};
"a6";"a12"**\crv{(48,-5)};
(6,0)*=<40pt,28pt>{
}*\frm{_\}};%
(6,-7.5)*{\text{\small $r'$ vertices}};
"a5"*=<40pt,28pt>{%
}*\frm{_\}};%
(24,-7.5)*{\text{\small $t$ vertices}};
"a8"*=<40pt,28pt>{%
}*\frm{_\}};%
(42,-7.5)*{\text{\small $s'$ vertices}};
"a11"*=<40pt,28pt>{%
}*\frm{_\}};%
(60,-7.5)*{\text{\small $t$ vertices}}
\endxy 
\quad.
$$
Put $$v_0=e_1\ot\cdots\ot e_{r'}\ot e_{r'+1}\ot\cdots\ot e_r\ot
e^*_{n-s'+1}\ot\cdots\ot e^*_n\ot e^*_{r'+1}\ot\cdots\ot e^*_r$$
and $v_1=e_1\ot\cdots\ot e_{r'}\ot e^*_{n-s'+1}\ot\cdots\ot e^*_n$.
Then we have for $d\in D$ that $p_d(v_0)=v_1$ if $d=d_0$ and $0$ otherwise. It follows that $\sum_ia_{id_0}u_i(v_1)=0$. By Lemma~\ref{lem.diagram}(iii) evaluation at $v_1$ is injective on $\Hom_G(V^{r',s'},\nabla(\lambda))$, so $a_{id_0}=0$ for all $i\in I$. Since we can construct a similar vector for any other $d\in D$ it follows that $a_{id}=0$ for all $i\in I$ and $d\in D$.
\hfil\break
(ii).\ The class of $S(n,r')\ot S(n,s')$-modules $M$ for which this homomorphism is an isomorphism, is closed under taking direct summands and direct sums. The same holds for the class of $S(n,r')\ot S(n,s')$-modules $M$ for which this homomorphism is surjective. By \cite[Lem.~3.4(i)]{D1} every injective $S(n,r')\ot S(n,s')$-module is a direct sum of direct summands of some $S^{\lambda^1}V\boxtimes S^{\lambda^2}V$, $\lambda^1$ and $\lambda^2$ partitions of $r'$ and $s'$ respectively. Furthermore, $\End_{G\times G}(V^{\ot r'}\boxtimes V^{\ot s'})\cong k\Sym_{r',s'}$. So it suffices now to show that the homomorphism is surjective if $M=S^{\lambda^1}V\boxtimes S^{\lambda^2}V$, $\lambda^1,\lambda^2$ as above.

Put $\mc H=\Hom_G(V^{r,s},V^{r',s'})$ and let $f^{(2)}=\Hom_{G\times G}(V^{\ot r'}\boxtimes V^{\ot s'},-)$ be the Schur functor.
Let $0\to M\to N\to P\to 0$ be a short exact sequence of $S(n,r')\ot S(n,s')$-modules with a good filtration. Then we have the following diagram
$$
\hspace{-.2cm}
\xymatrix @R=30pt @C=15pt @M=6pt{
{\mc H}\ot_{k\Sym_{r',s'}}f^{(2)}(M)\ar[r]\ar[d]&
{\mc H}\ot_{k\Sym_{r',s'}}f^{(2)}(N)\ar[r]\ar[d]&
{\mc H}\ot_{k\Sym_{r',s'}}f^{(2)}(P)\ar[r]\ar[d]&0\\
f_{rat}(M)\ar[r]&
f_{rat}(N)\ar[r]&
f_{rat}(P)\ar[r]&0
}
$$
with rows exact, because $f^{(2)}$ is exact and $f_{rat}$ is exact on modules with a good filtration. Here we have used that a $G\times G$-module with a good $G\times G$-filtration, also has a good $G$-filtration; see \cite[II.4.21]{Jan}. We deduce that if the homomorphism in (ii) is surjective for $N$, then it is surjective for $P$. Since the kernel of the canonical epimorphism $V^{\ot r'}\boxtimes V^{\ot s'}\to S^{\lambda^1}V\boxtimes S^{\lambda^2}V$ has a good $G$-filtration by \cite[2.1.14]{D2} applied to $G\times G$, and \cite[2.1.15(ii)(b)]{D2}, we are done.\\
\end{proof}

In the theorem below $f^{(2)}$ is the Schur functor from ${\rm mod}(S(n,r')\ot S(n,s'))$ to ${\rm mod}(k\Sym_{r',s'})$. 
Note that (ii) says that, under the stated condition, the homomorphism in Lemma~\ref{lem.surjective}(ii) is an isomorphism.
\begin{thm}\label{thm.rat_schur} 
Recall that $n\ge r+s$. The following holds.
\begin{enumerate}[{\rm(i)}]
\item For $\lambda=[\lambda^1,\lambda^2]\in\Lambda_{r,s}$ we have
\begin{align*}
f_{rat}(\nabla(\lambda))&\cong {\mc S}(\lambda^1,\lambda^2),\\
f_{rat}(S^{\lambda^1}V\ot S^{\lambda^2}V^*)&\cong {\mc M}(\lambda^1,\lambda^2)\text{, and}\\
f_{rat}(\bigwedge{\Nts}^{\lambda^1}V\ot\bigwedge{\Nts}^{\lambda^2} V^*)&\cong\widetilde{\mc M}(\lambda^1,\lambda^2)\text{\quad if ${\rm char}\,k=0$ or $>\max(|\lambda^1|,|\lambda^2|)$.}
\end{align*}
\item Let $M$ be an $S(n,r')\ot S(n,s')$-module. If $M$ is a direct sum of direct summands of $V^{\ot r'}\boxtimes V^{\ot s'}$ or if $M$ is injective, then
$$f_{rat}(M)\cong {\rm Ind}^{B_{r,s}}_{k\Sym_{r',s'}}f^{(2)}(M).$$ 
\end{enumerate}
\end{thm}

\begin{proof}
Whenever $\lambda=[\lambda^1,\lambda^2]\in\Lambda_{r,s}$ we assume $|\lambda^1|=r'$ and $|\lambda^2|=s'$. If we give $\Hom_G(V^{r',s'},\nabla(\lambda))$ the $k\Sym_{r',s'}$-module structure coming from the action of $\Sym_{r',s'}$ on $V^{r',s'}$ by place permutations, then the isomorphisms in \eqref{eq.rat_schur_iso} are $\Sym_{r',s'}$-equivariant. Now Lemma~\ref{lem.surjective}(i) and the isomorphism \eqref{eq.I_tiso} give us an epimorphism
$I_t\ot_{k\Sym_{r',s'}}(S(\lambda^1)\boxtimes S(\lambda^2))\to f_{rat}(\nabla(\lambda))$. The image of a nonzero homomorphism from $V^{r,s}$ to $\nabla(\lambda)$ must contain $L(\lambda)$ and therefore have $\lambda$ as a weight. The image of a homomorphism in $\varphi(I_{t,1})$ does not have $\lambda$ as a weight, since $\varphi(I_{t,1})$ has a basis of homomorphisms whose image lies is a submodule of $V^{r',s'}$ which is isomorphic to $V^{r'-1,s'-1}$. So, by \eqref{eq.Z_tiso} and the definition of $S(\lambda^1,\lambda^2)$, we obtain an epimorphism $S(\lambda^1,\lambda^2)\to f_{rat}(\nabla(\lambda))$. By Lemma~\ref{lem.dimension} this must be an isomorphism.

Let $M$ be an $S(n,r')\ot S(n,s')$-module. Lemma~\ref{lem.surjective}(ii) and the isomorphism $\varphi$ give us a homomorphism
$${\rm Ind}^{B_{r,s}}_{k\Sym_{r',s'}}f^{(2)}(M)\to f_{rat}(M)\eqno(*)$$
which is an isomorphism if $M$ is a direct sum of direct summands of \hbox{$V^{\ot r'}\boxtimes V^{\ot s'}$} and surjective for $M$ injective.
Note that $S^{\lambda^1}V\boxtimes S^{\lambda^2}V=S^{\lambda^1}V\ot S^{\lambda^2}V^*$ as $G$-modules and similar for exterior powers.
So we obtain an epimorphism $\mc M(\lambda^1,\lambda^2)\to f_{rat}(S^{\lambda^1}V\ot S^{\lambda^2}V^*)$ and a homomorphism $\widetilde{\mc M}(\lambda^1,\lambda^2)\to$\break
$f_{rat}(\bigwedge{\Nts}^{\lambda^1}V\ot\bigwedge{\Nts}^{\lambda^2} V^*)$, since $f(S^\mu V)=M(\mu)$ and $f(\bigwedge{\Nts}^\mu V)=k_{\rm sg}\ot M(\mu)$ by \cite[Lemma~3.5]{D1}. If ${\rm char}\,k=0$ or $>\max(r',s')$, then $S(n,r')\ot S(n,s')$ is semisimple, so every $S(n,r')\ot S(n,s')$-module is a direct sum of direct summands of $V^{\ot r'}\boxtimes V^{\ot s'}$ and (*) is an isomorphism for every $S(n,r')\ot S(n,s')$-module $M$. In particular, we have the third isomorphism in (i).

It remains to show that the epimorphism ${\mc M}(\lambda^1,\lambda^2)\to f_{rat}(S^{\lambda^1}V\ot S^{\lambda^2}V^*)$ is an isomorphism. Since (*) is an isomorphism if ${\rm char}\,k=0$ it suffices to show that the dimensions of $f_{rat}(S^{\lambda^1}V\ot S^{\lambda^2}V^*)$ and $\mc M(\lambda^1,\lambda^2)$ are independent of the characteristic. The dimension of $f_{rat}(S^{\lambda^1}V\ot S^{\lambda^2}V^*)$ is independent of the characteristic, since, by \cite[Prop.~A.2.2(ii)]{D2}, it only depends on the formal characters of the $G$-modules $V^{r,s}$ and $S^{\lambda^1}V\ot S^{\lambda^2}V^*$ (and these are independent of the characteristic). That $\mc M(\lambda^1,\lambda^2)$ has dimension independent of the characteristic follows from Proposition~\ref{prop.filtration}, the fact that $M(\lambda^1)\boxtimes M(\lambda^2)$ is self dual and the following well-known fact.

{\it Let $H$ be a finite group, let $N$ be a permutation module for $H$ over $k$ with $H$-stable basis $S$. Then the dimension of $N^H$ is equal to the number of $H$-orbits in $S$.}

We have now proved the second isomorphism in (i) and we have also proved (ii), since every injective $S(n,r')\ot S(n,s')$-module is a direct sum of direct summands of some $S^{\lambda^1}V\boxtimes S^{\lambda^2}V$, $\lambda^1,\lambda^2$ partitions of $r'$ resp. $s'$.
\end{proof}

For $\lambda\in\Lambda_{r,s}$ with $\lambda^1,\lambda^2$ $p$-regular and $\lambda^1,\lambda^2\ne\emptyset$ in case $r=s\ge1$ and $\delta=0$, we denote the projective cover of the irreducible $B_{r,s}$-module $\mc D(\lambda^1,\lambda^2)$ by $\mc P(\lambda^1,\lambda^2)$.

\begin{prop}\label{prop.rat_mult} 
Let $\lambda=[\lambda^1,\lambda^2],\mu=[\mu^1,\mu^2]\in\Lambda_{r,s}$. Then $T(\lambda)$ is a direct summand of the $G$-module $V^{r,s}$ if and only if $\lambda^1,\lambda^2$ are $p$-regular and $\lambda^1,\lambda^2\ne\emptyset$ in case $r=s\ge1$ and $\delta=0$. Now assume that $\lambda$ satisfies these conditions. Then
\begin{enumerate}[{\rm(i)}]
\item $f_{rat}(T(\lambda))={\mc P}(\lambda^1,\lambda^2)$.
\item The multiplicity of $T(\lambda)$ in $V^{r,s}$ is $\dim{\mc D}(\lambda^1,\lambda^2)$.
\item The decomposition number $[{\mc S}(\mu^1,\mu^2):{\mc D}(\lambda^1,\lambda^2)]$ equals the $\Delta$-filtration multiplicity $(T(\lambda):\Delta(\mu))$ and the $\nabla$-filtration multiplicity $(T(\lambda):\nabla(\mu))$.
\end{enumerate}
\end{prop}

\begin{proof}
Let $\Omega$ be the set of all partitions satisfying the stated conditions. The rational Schur functor $f_{rat}$ induces a category equivalence between the direct sums of direct summands of the $G$-module $V^{r,s}$ and the projective $B_{r,s}$-modules; see e.g. \cite[Prop~2.1(c)]{ARS}. Clearly, the number of isomorphism classes of indecomposable $B_{r,s}$-projectives is equal to $|\Omega|$. So, to prove the first assertion, it suffices to show that for each $\lambda\in\Omega$, $T(\lambda)$ is a direct summand of $V^{r,s}$. By Lemma~\ref{lem.directsummand} we may assume that $|\lambda^1|=r$ and $|\lambda^2|=s$. The indecomposable tilting module $T(\lambda^1)$ is a direct summand of $V^{\ot r}$, for example by \cite[Sect.~4.3, (1) and (4)]{D2}. Twisting with the inverse transpose, the same argument gives that $T(-\breve{\,\lambda^2)}$ is a direct summand of $(V^*)^{\ot s}$. So the tilting module $T(\lambda^1)\ot T(-\breve{\,\lambda^2)}$ is a direct summand of $V^{r,s}$. Since $T(\lambda^1)\ot T(-\breve{\,\lambda^2)}$ has highest weight $\lambda$, it has $T(\lambda)$ as a direct summand. It follows that $T(\lambda)$ occurs as a component of $V^{r,s}$.

Let $\lambda\in\Omega$. By Theorem~\ref{thm.rat_schur}(i) $f_{rat}(T(\lambda))$ surjects onto $f_{rat}(\nabla(\lambda))=\mc S(\lambda^1,\lambda^2)$. But $\mc S(\lambda^1,\lambda^2)$ surjects onto $\mc D(\lambda^1,\lambda^2)$. This proves (i), and (ii) is now also clear, since this multiplicity (as an indecomposable direct summand) is equal to the multiplicity of $\mc P(\lambda^1,\lambda^2)$ in $B_{r,s}$. We have $g_{rat}(f_{rat}(M))\cong M$ canonically for $M=V^{r,s}$ and therefore also for $M=T(\lambda)$. By \eqref{eq.adjointiso} we have $\Hom_G(T(\lambda),M)\cong\Hom_{B_{r,s}}(\mc P(\lambda^1,\lambda^2),f_{rat}(M))$ for every $S(n;r,s)$-module $M$. It follows that $[\mc S(\mu^1,\mu^2):\mc D(\lambda^1,\lambda^2)]=\dim\Hom_{B_{r,s}}(\mc P(\lambda^1,\lambda^2),\mc S(\mu^1,\mu^2))=\dim\Hom_G(T(\lambda),\nabla(\mu))=(T(\lambda):\Delta(\mu))=(T(\lambda):\nabla(\mu))$.
\end{proof}

From Theorem~\ref{thm.filt_mult} and Proposition~\ref{prop.rat_mult} we now obtain the following corollary.
\begin{cornn}
Let $\lambda^1,\lambda^2,\mu^1,\mu^2$ be partitions with $r-|\lambda^1|=s-|\lambda^2|\ge0$ and $r-|\mu^1|=s-|\mu^2|\ge0$ and assume that $\lambda^1,\lambda^2\ne\emptyset$ if $r=s\ge1$ and $\delta=0$. Assume also that $\lambda^h_1+l(\lambda^h)\le p$ for all $h\in\{1,2\}$. Choose $n\ge r+s$ such that $n=\delta$ mod $p$. Put $\lambda=[\lambda^1,\lambda^2]$, $\mu=[\mu^1,\mu^2]$ and choose $s_1,s_2\in\{1,\ldots,\min(n,p)\}$ with $s_1+s_2\le n$ and $\lambda\in\Lambda(s_1,s_2)$. Then
$$[\mc S(\mu^1,\mu^2):\mc D(\lambda^1,\lambda^2)]=
\begin{cases}
1\text{\ if $\mu\preceq\lambda$ and $c_{\lambda\mu}$ is oriented,}\\
0\text{\ otherwise.}
\end{cases}
$$
\end{cornn}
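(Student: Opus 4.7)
The plan is to combine Proposition~\ref{prop.rat_mult}(iii), which transports the decomposition number across the rational Schur functor to a Weyl-filtration multiplicity in a tilting module, with Theorem~\ref{thm.filt_mult}, which computes that multiplicity combinatorially.

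First I would verify that $\lambda$ satisfies all the hypotheses of Proposition~\ref{prop.rat_mult}. The assumption $\lambda^h_1+l(\lambda^h)\le p$ bounds every hook length of $\lambda^h$ by $\lambda^h_1+l(\lambda^h)-1<p$, so $\lambda^h$ is a $p$-core; moreover any part of $\lambda^h$ is repeated at most $l(\lambda^h)\le p-\lambda^h_1\le p-1$ times, so $\lambda^h$ is $p$-regular. Together with the explicit non-emptiness clause in the exceptional case $r=s\ge1,\delta=0$, this is exactly the hypothesis under which Proposition~\ref{prop.rat_mult} applies, and in particular $\lambda\in\Lambda_p$. The assumption $\lambda\in\Lambda(s_1,s_2)$ is given directly.

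Next I would observe that since $n\equiv\delta\pmod p$, we have $n=\delta$ in $k$, so the walled Brauer algebras $B_{r,s}(n)$ and $B_{r,s}(\delta)$ are literally the same $k$-algebra; consequently the cell modules $\mc S(\mu^1,\mu^2)$ and simple modules $\mc D(\lambda^1,\lambda^2)$ appearing on both sides agree, and the decomposition number in the corollary is the one computed by Proposition~\ref{prop.rat_mult}(iii) in the rational Schur algebra setting for $\GL_n$. That proposition yields
$$[\mc S(\mu^1,\mu^2):\mc D(\lambda^1,\lambda^2)]=(T(\lambda):\nabla(\mu)),$$
and Theorem~\ref{thm.filt_mult}, applied with the chosen $(s_1,s_2)$, evaluates the right-hand side as $1$ if $\mu\preceq\lambda$ and $c_{\lambda\mu}$ is oriented, and $0$ otherwise. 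Stitching these identifications together gives the corollary.

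There is no real obstacle: the bulk of the work is already done in the two cited results, so the corollary is essentially a direct translation. The only points worth checking are the reduction from the hook-length condition to the $p$-core and $p$-regularity assumptions required by Proposition~\ref{prop.rat_mult}, and the matching of the parameter $\delta$ with $n$ modulo $p$ so that the two walled Brauer algebras coincide; neither is substantial.
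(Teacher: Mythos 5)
Your proposal is correct and matches the paper's (implicit) argument exactly: the paper derives this corollary directly by combining Proposition~\ref{prop.rat_mult}(iii) with Theorem~\ref{thm.filt_mult}, and your verifications — that the hook-length bound gives $p$-regularity of $\lambda^1,\lambda^2$, and that $n\equiv\delta\pmod p$ identifies $B_{r,s}(n)$ with $B_{r,s}(\delta)$ over $k$ — are precisely the routine checks needed to make that combination legitimate.
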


\begin{rems}\label{rems.rat_schur}
1.\ From Proposition~\ref{prop.rat_mult} it is clear that when $p>\max(r,s)$ and, in case $r=s$, $n\ne0$ in $k$, then $V^{r,s}$ is a full tilting module for $S(n;r,s)$ and the walled Brauer algebra $B_{r,s}(n)$ is the Ringel dual, see e.g. \cite[Appendix A4]{D2}, of the rational Schur algebra $S(n;r,s)$.\\
2.\ Let $f_{rat}^{r',s'}$ be the rational Schur functor from ${\rm mod}(S(n;r',s'))$ to ${\rm mod}(B_{r',s'})$ and let $M$ be a $G$-module which has a filtration with sections isomorphic to some $\nabla(\lambda)$ with $|\lambda^1|=r'$ and $|\lambda^2|=s'$. Then $$f_{rat}(M)\cong Z_t\ot_{k\Sym_{r',s'}}f_{rat}^{r',s'}(M).$$
This is shown as in the case of the symplectic group, see \cite[Rem~2.1.1]{DT}.\\
3.\ Put $\pi_{r,s}=\{\lambda=[\lambda^1,\lambda^2]\in\Lambda_{r,s}\,|\,|\lambda^1|<r, |\lambda^2|<s\}$. Let $M$ be an $S(n,r)\ot S(n,s)$-module and put $N=O_{\pi_{r,s}}(M)$. By \cite[Prop.~A2.2(v), Lem.~A3.1]{D2} $N$ has a filtration with sections $\nabla(\lambda)$, $\lambda\in\pi_{r,s}$, and $M/N$ has a filtration with sections $\nabla(\lambda)$, $\lambda\in\Lambda_{r,s}$ with $|\lambda^1|=r$ (and $|\lambda^2|=s$). Note that $M/N=\nabla(\lambda)$ if $M=\nabla(\lambda^1)\boxtimes\nabla(\lambda^2)$. Now we can form the diagram
\begin{equation*}
\xymatrix @R=30pt @C=15pt @M=6pt{
f^{(2)}(M)\ar[r]\ar[d]& M_{(\varpi_r,-\breve{\varpi}_s)}\ar[d]\\
f_{rat}(M/N)\ar[r]& (M/N)_{[\varpi_r,\varpi_s]}
}
\end{equation*}
in the same way as \eqref{eq.rat_schur_iso} and by a proof very similar to that of Lemma~\ref{lem.diagram}(iii) we show that all maps are isomorphisms.
\end{rems}

\subsection{Further results on the rational Schur functor}\label{ss.further_results}
\begin{lem}\label{lem.tensor}
Let $M$ be an $G$-module. Then the canonical homomorphism
$$V^{r,s}\ot_{B_{r,s}}\Hom_G(V^{r,s},M)\to M$$ given by function application is an isomorphism if $M$ is a direct sum of direct summands of $V^{r,s}$ or if $r=s\ge2$ and $M=k$.
\end{lem}
\begin{proof}
That the canonical homomorphism is an isomorphism under the first condition is obvious, since $\End_G(V^{r,s})\cong B_{r,s}$. So assume $r=s\ge2$ and $M=k$. Since the homomorphism is always surjective and, as vector spaces,
$$V^{r,r}\ot_{B_{r,r}}\Hom_G(V^{r,r},k)\cong\Hom_{B_{r,r}}(\Hom_G(V^{r,r},k),V^{r,r})^*$$
by \cite[Lemma~3.60]{Rot} and the self-duality of $V^{r,r}$ as $B_{r,r}$-module, it suffices to show that $\Hom_{B_{r,r}}(\Hom_G(V^{r,r},k),V^{r,r})$ is one-dimensional. 
Recall that $\Hom_G(V^{r,r},k)$ is a left $B_{r,r}$-module by means of the standard anti-automorphism $\iota$ of $B_{r,r}$. It has a basis indexed by $((0,0),(r,r))$-diagrams and it is generated as a $k\Sym_{r,r}$-module by the homomorphism $P$ corresponding to the $((0,0),(r,r))$-diagram
$$
\xy
(0,0)="a1"*{\bullet};
(6,0)="a2"*{\cdots};
(12,0)="a3"*{\bullet};
(18,0)="a4"*{\bullet};
(24,0)="a5"*{\cdots};
(30,0)="a6"*{\bullet};
(6,5)="b2"*{\emptyset};
(24,5)="b8"*{\emptyset};
{(15,-6);(15,8)**@{--}};
"a1";"a4"**\crv{(9,-5)};
"a3";"a6"**\crv{(21,-5)};
"a2"*=<40pt,28pt>{
}*\frm{_\}};%
(6,-7.5)*{\text{\small $r$ vertices}};
"a5"*=<40pt,28pt>{%
}*\frm{_\}};%
(24,-7.5)*{\text{\small $r$ vertices}};
\endxy
\quad.
$$

It follows that any $B_{r,r}$-homomorphism from $\Hom_G(V^{r,r},k)$ to $V^{r,r}$ is determined by its image of $P$. One easily checks that $P\circ\iota(d)=P$, where $d\in B_{r,r}$ is given by
$$
\xy
(-7,2.5)*{d=};
(0,0)="a1"*{\bullet};
(6,0)="a2"*{\bullet};
(12,0)="a3"*{\bullet};
(18,0)="a4"*{\cdots};
(24,0)="a5"*{\bullet};
(30,0)="a6"*{\bullet};
(36,0)="a7"*{\bullet};
(42,0)="a8"*{\bullet};
(48,0)="a9"*{\cdots};
(54,0)="a10"*{\bullet};
(0,5)="b1"*{\bullet};
(6,5)="b2"*{\bullet};
(12,5)="b3"*{\bullet};
(18,5)="b4"*{\cdots};
(24,5)="b5"*{\bullet};
(30,5)="b6"*{\bullet};
(36,5)="b7"*{\bullet};
(42,5)="b8"*{\bullet};
(48,5)="b9"*{\cdots};
(54,5)="b10"*{\bullet};
{(27,-6);(27,8)**@{--}};
{"a2";"b2"**@{-}};
{"a3";"b3"**@{-}};
{"a5";"b5"**@{-}};
{"a6";"b7"**@{-}};
{"a8";"b8"**@{-}};
{"a10";"b10"**@{-}};
"a1";"a7"**\crv{(18,-7)};
"b1";"b6"**\crv{(15,12)};
"a3"*=<75pt,28pt>{
}*\frm{_\}};%
(12,-7.5)*{\text{\small $r$ vertices}};
"a8"*=<75pt,28pt>{%
}*\frm{_\}};%
(42,-7.5)*{\text{\small $r$ vertices}};
\endxy
\quad.
$$
Therefore the image of $P$ under such a homomorphism must lie in $d\cdot V^{r,r}=\big\{\sum_{i=1}^ne_i\ot u\ot e^*_i\ot v\,|\,u\in V^{\ot(r-1)}, v\in (V^*)^{\ot(r-1)}\big\}$. But then it must lie in the $\pi$-conjugate of this subspace for any $\pi$ in the diagonal copy of $\Sym_r$ in $\Sym_{r,r}$, since such a $\pi$ fixes $P$. We conclude that the image of $P$ under any $B_{r,r}$-homomorphism from $\Hom_G(V^{r,r},k)$ to $V^{r,r}$ must be a scalar multiple of $\sum_{i_1,\ldots,i_r=1}^ne_{i_1}\ot\cdots\ot e_{i_r}\ot e^*_{i_1}\ot\cdots\ot e^*_{i_r}$.
\end{proof}

In the proposition below $g^{(2)}$ is the inverse Schur functor from ${\rm mod}(k\Sym_{r',s'})$ to ${\rm mod}(S(n,r')\ot S(n,s'))$ given by $g^{(2)}(M)=(V^{\ot r'}\boxtimes V^{\ot s'})\ot_{k\Sym_{r',s'}}M$.
\begin{prop}\label{prop.ratyoung}\
\begin{enumerate}[{\rm(i)}]
\item If $n=0$ in $k$ and $r'=s'=0$, assume $r\ge2$. Then we have
$$g_{rat}({\rm Ind}^{B_{r,s}}_{k\Sym_{r',s'}} N)\cong g^{(2)}(N)$$
as $G$-modules, for every $k\Sym_{r',s'}$-module $N$.
\item Let $\lambda=[\lambda^1,\lambda^2]\in\Lambda_{r,s}$. If $\lambda^1=\lambda^2=\emptyset$ and $n=0$ in $k$, then assume $r=s\ge2$. Then $g_{rat}({\mc M}(\lambda^1,\lambda^2))\cong S^{\lambda^1}V\ot S^{\lambda^2}V^*$ and if ${\rm char}\,k\ne2$, then $g_{rat}(\widetilde{\mc M}(\lambda^1,\lambda^2))\cong \bigwedge{\Nts}^{\lambda^1}V\ot\bigwedge{\Nts}^{\lambda^2}V^*$.
\item Let $\lambda=[\lambda^1,\lambda^2]\in\Lambda_{r,s}$. The $G$-module $S^{\lambda^1}V\ot S^{\lambda^2}V^*$ has a unique indecomposable summand $J(\lambda)$ in which $\nabla(\lambda)$ has filtration multiplicity $>0$ and this multiplicity is equal to $1$. Every summand of ${\mc M}(\lambda^1,\lambda^2)$ has a Specht filtration and $f_{rat}(J(\lambda))\cong{\mc Y}(\lambda^1,\lambda^2)$.
\end{enumerate}
\end{prop}
\begin{proof}
(i).\ Since ${\rm Ind}^{B_{r,s}}_{k\Sym_{r',s'}} N\cong\Hom_{G}(V^{r,s},V^{r',s'})\ot_{k\Sym_{r',s'}}N$, this follows from Lemmas~\ref{lem.directsummand} and \ref{lem.tensor} applied to $V^{r',s'}$.\\
(ii).\ If we take $(r',s')=(|\lambda^1|,|\lambda^2|)$ in (i), then we get $g_{rat}({\mc M}(\lambda^1,\lambda^2))\cong g^{(2)}(M(\lambda^1)\boxtimes M(\lambda^2))$ and $g_{rat}({\mc M}(\lambda^1,\lambda^2))\cong g^{(2)}(k_{\rm sg}\ot M(\lambda^1)\boxtimes M(\lambda^2))$. One easily verifies that $g^{(2)}(M(\lambda^1)\boxtimes M(\lambda^2))\cong S^{\lambda^1}V\boxtimes S^{\lambda^2}V$ and, in case ${\rm char}\,k\ne2$, $g^{(2)}(k_{\rm sg}\ot M(\lambda^1)\boxtimes M(\lambda^2))\cong\bigwedge{\Nts}^{\lambda^1}V\boxtimes\bigwedge{\Nts}^{\lambda^2}V$.\\
(iii).\ Put $(r',s')=(|\lambda^1|,|\lambda^2|)$. The filtration multiplicity of $\nabla(\lambda^1)\boxtimes\nabla(\lambda^2)$ in $S^{\lambda^1}V\ot S^{\lambda^2}V$ is 1 and if, for $\mu=[\mu^1,\mu^2]\in\Lambda_{r,s}$, $\nabla(\nu)$ has filtration multiplicity $>0$ in $\nabla(\mu^1)\boxtimes\nabla(\mu^2)$, then either $\nu=\mu$ and the multiplicity is 1 or $|\nu^h|<|\mu^h|$ for all $h\in\{1,2\}$ as one can easily deduce from Lemma~\ref{lem.diagram}(ii). We conclude that the filtration multiplicity of $\nabla(\lambda)$ in $S^{\lambda^1}V\ot S^{\lambda^2}V^*$ is 1. A direct summand of a module with a good filtration has a good filtration. So, by the Krull-Schmidt theorem, there is a unique indecomposable summand $J(\lambda)$ in which $\nabla(\lambda)$ has filtration multiplicity $>0$. This proves the first assertion. If $\lambda^1=\lambda^2=\emptyset$, then $S^{\lambda^1}V\ot S^{\lambda^2}V^*=k$, $t=r=s$, $Z_r=I_r$. An argument very similar to that of the proof of Lemma~\ref{lem.tensor} shows that $\End_{B_{r,s}}(I_r)=k$, i.e $I_r$ is indecomposable. So ${\mc S}(\lambda^1,\lambda^2)={\mc M}(\lambda^1,\lambda^2)={\mc Y}(\lambda^1,\lambda^2)=I_r$ and the second assertion is obvious.
Now assume $(\lambda^1,\lambda^2)\ne(\emptyset,\emptyset)$. By (ii) and Theorem~\ref{thm.rat_schur}(i) we have $g_{rat}(f_{rat}(M))\cong M$ canonically for every direct summand of $S^{\lambda^1}V\ot S^{\lambda^2}V^*$ and $f_{rat}(g_{rat}(N))\cong N$ canonically for every direct summand $N$ of ${\mc M}(\lambda^1,\lambda^2)$. In particular, every direct summand of ${\mc M}(\lambda^1,\lambda^2)$ has a Specht filtration.

Now let $I(\lambda^h)\subseteq S^{\lambda^h}V$, $h=1,2$, be the $S(n,r')$ resp $S(n,s')$-injective hull of $\nabla(\lambda^h)$, $h=1,2$. Then $I(\lambda^1,\lambda^2)=I(\lambda^1)\ot I(\lambda^2)\subseteq S^{\lambda^1}V\boxtimes S^{\lambda^2}V$ is the $S(n,r')\ot S(n,s')$-injective hull of $\nabla(\lambda^1)\boxtimes\nabla(\lambda^2)$. By \cite[3.6]{D1} we have $f(I(\nu))=Y(\nu)$. Put $\pi=\pi_{r',s'}=\{\mu=[\mu^1,\mu^2]\in\Lambda_{r,s}\,|\,|\mu^1|<r', |\mu^2|<s'\}$. By Remarks~\ref{rems.rat_schur}, 2 and 3 we have $f_{rat}(I(\lambda^1,\lambda^2)/O_\pi(I(\lambda^1,\lambda^2)))\cong Z_t\ot_{k\Sym_{r',s'}}(Y(\lambda^1)\boxtimes Y(\lambda^2))$. As in \cite[Prop.~3]{HarPag} $Z_t\ot_{k\Sym_{r',s'}}(Y(\lambda^1)\boxtimes Y(\lambda^2))$ is indecomposable. Since $I(\lambda^1,\lambda^2)/O_\pi(I(\lambda^1,\lambda^2))$ has a good filtration, it must also be indecomposable. Write $I(\lambda^1,\lambda^2)=\bigoplus_{i=1}^l J_i$ with each $J_i$ an indecomposable $G$-module. Then $I(\lambda^1,\lambda^2)/O_\pi(I(\lambda^1,\lambda^2))\cong\bigoplus_{i=1}^l J_i/O_\pi(J_i)$. So there is a unique $j\in\{1,\cdots,l\}$ such that $J_j/O_\pi(J_j)\cong I(\lambda^1,\lambda^2)/O_\pi(I(\lambda^1,\lambda^2))$ and $J_i\subseteq O_\pi(I(\lambda^1,\lambda^2))$ for all $i\ne j$. Clearly we must have $J_j\cong J(\lambda)$. Furthermore, since the kernel of $J(\lambda)\to J(\lambda)/O_\pi(J(\lambda))$ has a good filtration, we have that $f_{rat}(J(\lambda))$ surjects onto $Z_t\ot_{k\Sym_{r',s'}}Y(\lambda^1,\lambda^2)$. So $f_{rat}(J(\lambda))\cong{\mc Y}(\lambda^1,\lambda^2)$.
\end{proof}

\begin{rems}
1.\ The rational Schur coalgebra is $A(n;r,s)=O_{\Lambda_{r,s}}(k[G])$, where the action of $G$ on $k[G]$ comes from right multiplication in $G$; see \cite[A.14]{Jan} for the generalities.
We can also let $G$ act on $k[G]$ using left multiplication and the transpose map. For this action we also have $A(n;r,s)=O_{\Lambda_{r,s}}(k[G])$. Now the two actions on $k[G]$ are isomorphic via the comorphism of the transpose map. This isomorphism restricts to an isomorphism of the two actions on $A(n;r,s)$. With the left multiplication action (via the transpose map), $A(n;r,s)$ is $S(n;r,s)^\circ$ where $S(n;r,s)$ has the left multiplication action of $G$ which corresponds to the left regular action of $S(n;r,s)$. Now give $A(n;r,s)$ the right multiplication action and $S(n;r,s)$ the left multiplication action. 
Recall also that $V^{r,s}$ is self-dual as a $B_{r,s}$-module. Then
\begin{align*}f_{rat}(A(n;r,s))&=\Hom_{G}(V^{r,s},S(n;r,s)^\circ)\cong\Hom_{G}(S(n;r,s),V^{r,s})\cong V^{r,s}\text{\ and}\\
g_{rat}(V^{r,s})&=\tilde g_{rat}(V^{r,s})^\circ=\End_{B_{r,s}}(V^{r,s})^\circ=S(n;r,s)^\circ\cong A(n;r,s). 
\end{align*}
The class of $S(n;r,s)$-modules $M$ for which $g_{rat}(f_{rat}(M))\cong M$ canonically, is closed under taking direct summands and direct sums. In particular it contains the injective $S(n;r,s)$-modules, since, by the above, it contains $A(n;r,s)$. For the same reason the class of $B_{r,s}$-modules $N$ for which $f_{rat}(g_{rat}(N))\cong N$ canonically, contains the projective $B_{r,s}$-modules.\\
2.\ The results for the rational Schur functor look more like the results in \cite{DT} for the orthogonal Schur functor (Sect~2) than like those for the symplectic Schur functor (Sect~4).
This is because we work with $B_{r,s}(\delta)$ as a subalgebra of $B_{r+s}(\delta)$. There is also a ``symplectic Brauer algebra" $\widetilde {B}_{r+s}(\delta)$, see e.g. \cite[p 871]{Br}, \cite{Wen} or \cite[Sect.~3]{T0}.
Furthermore, there is an isomorphism $B_{r+s}(-\delta)\stackrel{\sim}{\to}\widetilde{B}_{r+s}(\delta)$ (*) which sends each of the $r+s$ standard generators of $B_{r+s}(-\delta)$ to the negative of the corresponding standard generator of $\widetilde{B}_{r+s}(\delta)$, see the proof of \cite[Cor~3.5]{Wen}.
One can define a walled subalgebra $\widetilde {B}_{r,s}(\delta)$ of $\widetilde {B}_{r+s}(\delta)$ in precisely the same way as $B_{r,s}(\delta)$ was defined as a subalgebra of $B_{r+s}(\delta)$. Now one can check that two ``walled diagrams" in $\widetilde {B}_{r,s}(\delta)$ multiply precisely as in $B_{r,s}(\delta)$, i.e. their ``symplectic sign" equals $1$.
It is enough to check this on generators and with $\delta$ specialised to $2m$, and one can also easily deduce it from the description of the sign in \cite[Sect.~3]{T0}. 
So we have $\widetilde {B}_{r,s}(\delta)=B_{r,s}(\delta)$ and the isomorphism (*) restricts to an isomorphism $\theta:B_{r,s}(-\delta)\stackrel{\sim}{\to}B_{r,s}(\delta)$.
Now we could let $B_{r,s}(-n)$ act on $V^{r,s}$ via this isomorphism and then we could define another version of the rational Schur functor ${\rm mod}(S(n;r,s))\to{\rm mod}(B_{r,s}(-n))$ for which the results would look like those for the symplectic Schur functor. However, these results can also be obtained from the present results by applying the equivalence of categories ${\rm mod}(B_{r,s}(n))\stackrel{\sim}{\to}{\rm mod}(B_{r,s}(-n))$ given by $\theta$.
For example, when, for $M$ an $S(n,r')\ot S(n,s')$-module, we turn ${\rm Ind}_{k\Sym_{r',s'}}^{B_{r,s}(n)}M$ into a $B_{r,s}(-n)$-module via $\theta$, then we obtain ${\rm Ind}_{k\Sym_{r',s'}}^{B_{r,s}(-n)}k_{\rm sg}\ot M$.
\end{rems}


\end{document}